\let\saveendproof=\endproof
\def\endproof{\qed\saveendproof}
\definecolor{mediumspringgreen}{rgb}{0.0, 0.98039215, 0.60392156}
\def\visible<#1>{}  
\newcommand\ifpdf
\DeclareMathOperator    \conv           {conv}
\DeclareMathOperator    \relint         {rel\,int}
\DeclareMathOperator    \verts          {vert}
\newcommand{\old}[1]{{}}
\newcommand{\bb}{\mathbb}
\newcommand{\R}{\bb R}
\newcommand{\Q}{\bb Q}
\newcommand{\Z}{\bb Z}
\newcommand\st{\mid}
\renewcommand{\P}{\mathcal{P}}
\def\st{\mid}
\newenvironment{psmallmatrixbig}{\bigl(\smallmatrix}{\endsmallmatrix\bigr)}
\newcommand\InlineFrac[2]{#1/#2}  
\newcommand\ColVec[3][\relax]
\let\frac=\InlineFrac\begin{psmallmatrixbig}#2\vphantom{/}\\#3\vphantom{/}\end{psmallmatrixbig}\egroup
\let\frac=\InlineFrac\begin{psmallmatrixbig}\ifx#200\else#2/#1\fi\\\ifx#300\else#3/#1\fi\end{psmallmatrixbig}\egroup
\renewcommand{\pod}[1]
{\allowbreak\mathchoice{\mkern18mu}{\mkern8mu}{\mkern8mu}{\mkern8mu}(#1)}
\chardef\Myunderscore=`\_
  \def\Myunderscore{\textunderscore}%
\newcommand\underscore{\Myunderscore\allowbreak}
\DeclareRobustCommand\sage[1]{\texttt{#1}}
\DeclareRobustCommand\sagefunc[1]{\pgfkeys{/sagefunc/#1}}
\titlerunning{Facets, weak facets, and extreme functions}
\title{On the notions of\\ facets, weak facets, and extreme functions\\
  of the Gomory--Johnson infinite group problem%
  \thanks{The authors gratefully acknowledge partial support from the National Science
  Foundation through grant DMS-1320051, awarded to M.~K\"oppe.}}
\author{Matthias K\"oppe \and
  Yuan Zhou}
\institute{Dept.\ of Mathematics, University of California, Davis\\
  \texttt{mkoeppe@math.ucdavis.edu}, \texttt{yzh@math.ucdavis.edu}}
\date{$\relax$Revision: 2181 $ - \ $Date: 2016-11-20 16:21:21 -0800 (Sun, 20 Nov 2016) $ $\!\!\!}
\newcommand\Figure[2][\relax]{%
  \begin{figure}[h!]
    \includegraphics[width=.8\textwidth]{#2}
    \caption{\ifx#1\relax#2\else#1\fi}
  \end{figure}
}
\begin{document}
 \newcommand{\tgreen}[1]{\textsf{\textcolor {ForestGreen} {#1}}}
 \newcommand{\tred}[1]{\texttt{\textcolor {red} {#1}}}
 \newcommand{\tblue}[1]{\textcolor {blue} {#1}}

\maketitle

\begin{abstract}
  We investigate three competing notions that generalize the notion of a facet
  of finite-dimensional polyhedra to the infinite-dimension\-al Gomory--Johnson
  model.  These notions were known to coincide for continuous piecewise linear
  functions with rational breakpoints.  We show that two of the notions, extreme functions and
  facets, coincide for the case of continuous piecewise linear functions,
  removing the hypothesis regarding rational breakpoints.
  We then separate the three notions using discontinuous examples.
\end{abstract}

\section{Introduction}

\subsection{Facets in the finite-dimensional case}
Let $G$ be a finite index set.  The space~$\R^{(G)}$ of real-valued functions
$y\colon G\to\R$ is isomorphic to and routinely identified with the Euclidean
space $\R^{|G|}$.  Let $\R^{G}$ denote its dual space.  It is the space of
functions $\alpha\colon G \to \R$, which we consider as linear functionals on
$\R^{(G)}$ via the pairing $\langle \alpha, y \rangle = \sum_{r\in G} \alpha(r)
y(r)$.  Again it is routinely identified with the Euclidean space $\R^{|G|}$, 
and the dual pairing $\langle \alpha, y\rangle $ is the Euclidean inner product.
A~(closed, convex) rational polyhedron 
of~$\R^{(G)}$ is the set of $y\colon
G\to\R$ satisfying $\langle\alpha_i, y\rangle \geq \alpha_{i, 0}$, where
$\alpha_i\in \Z^G$ are integer linear functionals and $\alpha_{i,
  0}\in\Z$, for $i$ ranging over another finite index set~$I$. 

Consider an integer linear optimization problem in $\R^{(G)}$, i.e.,
the problem of minimizing a linear functional $\eta \in \R^G$ over 
a feasible set $F 
\subseteq \{\, y\colon G \to \Z_+\,\}
\subset \R_+^{(G)}$
, or, equivalently, 
over the convex hull $R = \conv F \subset \R_+^{(G)}$.  
A \emph{valid inequality} for $R$ is an inequality of the form
$\langle \pi, y \rangle \geq \pi_0$, where $\pi \in \R^G$, which holds for
all $y \in R$ (equivalently, for all $y \in F$).  If $R$ is closed and
convex, it is exactly the set of all $y$ that satisfy all valid inequalities. 
In the following we will restrict ourselves to the case that $R \subseteq
\R_+^{(G)}$ is a polyhedron of blocking type, in which case we only need to
consider normalized valid inequalities with $\pi\geq0$ and $\pi_0=1$. 

Let $P(\pi)$ denote the set of functions $y \in F$ for which the inequality
$\langle \pi, y \rangle \geq 1$ is tight, i.e.,
$\langle \pi, y \rangle = 1$.  If $P(\pi) \neq \emptyset$, then
$\langle \pi, y \rangle \geq 1$ is a \emph{tight valid inequality}.  Then
$R$ is exactly the set of all $y\geq0$ that satisfy all tight valid inequalities.
A valid inequality $\langle \pi, y \rangle \geq 1$ is called
\emph{minimal} 
if there is no other valid inequality $\pi' \neq \pi$ such that $\pi' \leq \pi$ pointwise.
One can show that a minimal valid inequality is tight.
A valid inequality $\langle \pi, y \rangle \geq 1$ is called
\emph{facet-defining} if 
\begin{equation}\tag{wF}
  \begin{aligned}
    \text{for every valid inequality $\langle \pi', y \rangle \geq 1$
      such that $P(\pi)\subseteq P(\pi')$},\\
    \text{we have $P(\pi)=P(\pi')$,}
  \end{aligned}
  \label{eq:facet-definition-like-weak-facet}
\end{equation}
or, in other words, if the face induced by $\langle \pi, y \rangle \geq 1$
is maximal. 
Under the above assumptions, $R$ has full affine dimension.  Thus, we get the
following characterization of facet-defining inequalities:
\begin{equation}\tag{F}
  \begin{aligned}
    \text{for every valid inequality $\langle \pi', y \rangle \geq 1$
      such that $P(\pi)\subseteq P(\pi')$},\\
    \text{we have $\pi = \pi'$.}
  \end{aligned}
  \label{eq:facet-definition-like-facet}
\end{equation}
The theory of polyhedra gives another characterization of facets:
\begin{equation}\tag{E}
   \begin{aligned}
     \text{If $\langle \pi^1, y \rangle \geq 1$ and $\langle \pi^2, y \rangle
       \geq 1$ are valid inequalities, and $\pi = \tfrac12 (\pi^1+\pi^2)$}\\
     \text{then $\pi = \pi^1 = \pi^2$}. 
     \label{eq:facet-definition-like-extreme-function}
   \end{aligned}
\end{equation}




\subsection{Facets in the infinite-dimensional Gomory--Johnson model}



It is perhaps not surprising that the three conditions
\eqref{eq:facet-definition-like-weak-facet},
\eqref{eq:facet-definition-like-facet}, and
\eqref{eq:facet-definition-like-extreme-function} are no longer equivalent
when $R$ is a general convex set that is not polyhedral, and in particular when we change from the
finite-dimensional to the infinite-dimensional case. 
In the present paper, however, we consider a particular case of an
infinite-dimensional model, in which this question has eluded researchers for
a long time. Let $G=\Q$ or $G=\R$ and let $\R^{(G)}$ now denote the space of
finite-support functions $y\colon G\to\R$. 
The so-called \emph{infinite group problem} was introduced
by Gomory and Johnson in their seminal papers \cite{infinite,infinite2}. 
Let $F = F_{f}(G,\Z)\subseteq \R_+^{(G)}$ be the set of all finite-support
functions $y\colon G\to\Z_+ $ satisfying the equation 
\begin{equation}
  \label{GP} 
  \sum_{r \in G} r\, y(r) \equiv f \pmod{1}
\end{equation}
where $f$ is a given element of $G\setminus \Z$. 
We study its convex hull $R = R_{f}(G,\Z) \subseteq \R_+^{(G)}$, whose elements are understood as
finite-support functions $y\colon G \to \R_+$. 

Valid inequalities for $R$ are of the form
$\langle \pi, y \rangle \geq \pi_0$, where $\pi$ comes from the dual space
$\R^G$, which is the space of all real-valued functions (without the
finite-support condition).  When $G=\Q$, then $R$ is again of ``blocking
type'' (see, for example, \cite[section 5]{corner_survey}), and so we again
may assume $\pi\geq0$ and $\pi_0=1$.

If $G=\R$ (the setting of the present paper), typical pathologies from the
analysis of functions of a real 
variable come into play.  For example, by \cite[Proposition 2.4]{igp_survey}
there is an infinite-dimensional space of valid equations $\langle\pi^*,
y\rangle=0$, where $\pi^*$ are constructed using a Hamel basis of $\R$
over~$\Q$. Each of these functions~$\pi^*$ has a graph whose topological
closure is~$\R^2$.
In order to tame these pathologies, it is common to make further assumptions.
Gomory--Johnson \cite{infinite,infinite2} only considered continuous
functions~$\pi$.  However, this rules out many interesting functions such as
the Gomory fractional cut. Instead it has become common in the literature to build the assumption
$\pi\geq0$ into the definition; then we can again normalize $\pi_0=1$.  We
call such functions $\pi$ \emph{valid functions}.

(Minimal) valid functions~$\pi$ that satisfy the conditions
\eqref{eq:facet-definition-like-weak-facet},
\eqref{eq:facet-definition-like-facet}, and
\eqref{eq:facet-definition-like-extreme-function}, are called \emph{weak
  facets}, \emph{facets}, and \emph{extreme functions}, respectively. 
The relation of these notions, in particular of facets and extreme functions, 
has remained unclear in the literature.  For example, Basu et
al. \cite{bccz08222222} wrote: 
\begin{quote}
  The statement that extreme functions are facets appears to be quite
  nontrivial to prove, and to the best of our knowledge there is no proof in
  the literature. We therefore cautiously treat extreme functions and facets
  as distinct concepts, and leave their equivalence as an open question.
\end{quote}
We refer to \cite[section 2.2]{igp_survey} for further discussion.


\subsection{Contribution of this paper}

A well known sufficient condition for facetness of a minimal valid function~$\pi$ is
the Gomory--Johnson Facet Theorem.  In its strong form, due to
Basu--Hildebrand--K\"{o}ppe--Molinaro \cite{basu-hildebrand-koeppe-molinaro:k+1-slope}, it reads:
\begin{theorem}[{Facet Theorem, strong form, \cite[Lemma
34]{basu-hildebrand-koeppe-molinaro:k+1-slope}; see also \cite[Theorem
2.12]{igp_survey}}]
\label{thm:facet-theorem-strong-form}
  Suppose for every minimal valid function $\pi'$, $E(\pi) \subseteq E(\pi')$
  implies $\pi' = \pi$.  Then $\pi$ is a facet.
\end{theorem}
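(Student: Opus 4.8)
The plan is to verify the facet condition \eqref{eq:facet-definition-like-facet} directly: I will show that every valid function $\pi'$ with $P(\pi)\subseteq P(\pi')$ satisfies $\pi'=\pi$. The engine is a dictionary translating the data $P(\pi)$ of tight \emph{integer} feasible solutions into the additivity data $E(\pi)=\{(u,v)\mid \pi(u)+\pi(v)=\pi(u+v)\}$; this lets the hypothesis, which is stated via $E$, act on the facet condition, which is stated via $P$. Throughout, $\pi$ is a minimal valid function, hence subadditive with $\pi(0)=0$ and symmetric, $\pi(r)+\pi(f-r)=1$ for all $r$.

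First I would reduce to a minimal comparison function. Given valid $\pi'$ with $P(\pi)\subseteq P(\pi')$, pick a minimal valid $\pi''\le\pi'$ (every valid function dominates a minimal one). Since $\langle\pi'',y\rangle\le\langle\pi',y\rangle$ for $y\ge0$ while both are $\ge1$ on $F$, every $y$ tight for $\pi'$ is tight for $\pi''$; hence $P(\pi')\subseteq P(\pi'')$ and so $P(\pi)\subseteq P(\pi'')$. The key step is then to upgrade this to $E(\pi)\subseteq E(\pi'')$. For $(u,v)\in E(\pi)$ consider the finite-support $y\in F$ with $y(u)=y(v)=y(f-u-v)=1$ (modified in the obvious way when these points coincide or when $f-u-v\equiv0$). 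Using symmetry of $\pi$ to write $\pi(f-u-v)=1-\pi(u+v)$, one gets $\langle\pi,y\rangle=\pi(u)+\pi(v)-\pi(u+v)+1=1$, so $y\in P(\pi)\subseteq P(\pi'')$; running the computation backwards for $\pi''$ via its own symmetry gives $\pi''(u)+\pi''(v)=\pi''(u+v)$, that is, $(u,v)\in E(\pi'')$.

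Thus $E(\pi)\subseteq E(\pi'')$, and the hypothesis forces $\pi''=\pi$; in particular $\pi\le\pi'$. It remains to upgrade this inequality to equality. For each $r$, the two-point solution $y$ with $y(r)=y(f-r)=1$ lies in $F$ and, by symmetry of $\pi$, satisfies $\langle\pi,y\rangle=1$, so $y\in P(\pi)\subseteq P(\pi')$ and $\pi'(r)+\pi'(f-r)=1$. Combined with $\pi'(r)\ge\pi(r)$, $\pi'(f-r)\ge\pi(f-r)$, and $\pi(r)+\pi(f-r)=1$, this forces $\pi'(r)=\pi(r)$. Since $r$ is arbitrary, $\pi=\pi'$, which is exactly \eqref{eq:facet-definition-like-facet}.

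I expect the main obstacle to be the key step, the faithful translation $P(\pi)\subseteq P(\pi'')\Rightarrow E(\pi)\subseteq E(\pi'')$: realizing each additivity pair $(u,v)$ by a genuine nonnegative integer solution of \eqref{GP}, and correctly treating the degenerate configurations ($u=v$, $u+v\equiv f$, or a point $\equiv0$) where one must switch between two- and three-point solutions and invoke the symmetry relation with care. This passage between the integer-solution description behind facets and the additivity description behind the hypothesis is precisely what earlier authors flagged as nontrivial.
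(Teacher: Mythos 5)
Your proof is correct, but there is no in-paper proof to compare it against: the paper imports this theorem verbatim from \cite[Lemma 34]{basu-hildebrand-koeppe-molinaro:k+1-slope} (see also \cite[Theorem 2.12]{igp_survey}) and never proves it. What you have written is a self-contained reconstruction of precisely the two ingredients the paper cites instead of proving. Your ``dictionary'' step --- realizing each $(u,v)\in E(\pi)$ by the three-point solution with $y(u)=y(v)=y(f-u-v)=1$ and using the symmetry of the two minimal functions to pass between $\langle\cdot,y\rangle=1$ and additivity --- is exactly the ``if'' direction of \autoref{lemma:P_pi_and_E_pi}, i.e.\ $P(\pi)\subseteq P(\pi'')\Rightarrow E(\pi)\subseteq E(\pi'')$, which the paper attributes to \cite[Theorem 20]{basu-hildebrand-koeppe-molinaro:k+1-slope} and does not reprove (it proves only the converse direction, which you do not need here). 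Your reduction to a minimal comparison function $\pi''\le\pi'$, plus the final upgrade from $\pi\le\pi'$ to $\pi=\pi'$ via the two-point solutions $y(r)=y(f-r)=1$, reconstructs the equivalence, also cited to \cite{basu-hildebrand-koeppe-molinaro:k+1-slope}, that facetness need only be tested against minimal valid functions. Both steps are sound: the degenerate configurations you flag are absorbed by multiplicities in $y$ together with $\pi(0)=0$ and $\pi(f)=1$, and the existence of a minimal valid $\pi''\le\pi'$ is the same fact (\cite[Theorem 1]{basu-hildebrand-koeppe-molinaro:k+1-slope}) the paper itself invokes in \autoref{lemma:weak_facet_minimal}. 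The trade-off: your route makes the theorem self-contained at the cost of re-deriving literature results, while the paper's citation buys brevity and defers the content to its sources.
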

(Here $E(\pi)$ is the \emph{additivity domain} of $\pi$, defined in
\autoref{sec:notations}.) 
We show (\autoref{lemma:facet_theorem} below) that, in fact, \textbf{this holds as an
``if and only if'' statement.} 

For the case of continuous piecewise linear functions with rational
breakpoints, Basu et al. \cite[Proposition 2.8]{igp_survey} showed that the
notions of extreme functions and facets coincide.  This was a consequence of
Basu et al.'s finite oversampling theorem \cite{basu-hildebrand-koeppe:equivariant}. 
We \textbf{sharpen this result by removing the hypothesis regarding rational
breakpoints.}

\begin{theorem} 
\label{lemma:cont_pwl_extreme_is_facet}
In the case of continuous piecewise linear functions (not necessarily with rational breakpoints), 
$\{$extreme functions$\}=\{$facets$\}$. 
\end{theorem}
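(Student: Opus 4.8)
The plan is to prove the two inclusions separately, and the inclusion $\{\text{facets}\}\subseteq\{\text{extreme functions}\}$ is the elementary one, valid in complete generality (no continuity or piecewise linearity needed). Indeed, if $\pi=\tfrac12(\pi^1+\pi^2)$ with $\pi^1,\pi^2$ valid, then for every $y\in P(\pi)$ the identity $\tfrac12(\langle\pi^1,y\rangle+\langle\pi^2,y\rangle)=1$ together with $\langle\pi^i,y\rangle\ge1$ forces $\langle\pi^i,y\rangle=1$; hence $P(\pi)\subseteq P(\pi^1)\cap P(\pi^2)$, and the facet condition \eqref{eq:facet-definition-like-facet} gives $\pi^1=\pi^2=\pi$. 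So I would dispose of this direction first and then concentrate on proving that an extreme continuous piecewise linear $\pi$ is a facet.

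For the hard direction I would invoke the ``if and only if'' form of the Facet Theorem (\autoref{lemma:facet_theorem}): it suffices to show that every minimal valid $\pi'$ with $E(\pi)\subseteq E(\pi')$ equals $\pi$. Writing $\Delta g(x,y)=g(x)+g(y)-g(x+y)$ for the subadditivity slack (with $x+y$ taken $\bmod 1$, so that $E(\pi)=\{(x,y):\Delta\pi(x,y)=0\}$), the idea is to realize $\pi$ as a midpoint of two valid functions constructed from $\pi'$, namely $\pi^\pm=\pi\pm\epsilon(\pi'-\pi)$, and then let extremality collapse $\pi'-\pi$ to $0$. Here $\pi^+=(1-\epsilon)\pi+\epsilon\pi'$ is a convex combination, hence automatically valid for $\epsilon\in(0,1)$, and $\pi=\tfrac12(\pi^++\pi^-)$; so the entire difficulty is to certify that $\pi^-=(1+\epsilon)\pi-\epsilon\pi'$ is valid for some small $\epsilon>0$. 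The symmetry condition and $\pi^-(0)=0$ survive because they are affine and correctly normalized, and on the additivity domain $\Delta\pi=\Delta\pi'=0$ (using $E(\pi)\subseteq E(\pi')$), so $\Delta\pi^-$ also vanishes there. What is left is subadditivity and nonnegativity of $\pi^-$ away from $E(\pi)$.

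The key step, and the step I expect to be the main obstacle, is transferring regularity from $\pi$ to $\pi'$. Because $\pi$ is continuous piecewise linear, $E(\pi)$ is a union of faces of the two-dimensional complex $\Delta\mathcal P$ cut out by the breakpoints of $\pi$, and every two-dimensional additive face of $\pi$ lies in $E(\pi')$. Applying the Gomory--Johnson Interval Lemma to these faces forces $\pi'$ to be affine, with the same slopes as $\pi$, on each interval that is \emph{covered} by two-dimensional additivity, so that $\Delta\pi'$ becomes affine on the corresponding faces of $\Delta\mathcal P$. The genuinely delicate case is a linearity interval of $\pi$ that is not covered by any two-dimensional additive face: there the Interval Lemma says nothing and $\pi'$ could a priori be irregular. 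This is exactly where extremality of $\pi$ must enter — an uncovered interval would support a nonzero perturbation $\bar\pi$ with $\pi\pm\epsilon\bar\pi$ valid, contradicting extremality — so that in fact all intervals are covered, $\pi'$ is continuous piecewise linear with breakpoints among those of $\pi$, and $\Delta\pi'$ is affine on every face of $\Delta\mathcal P$.

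Once $\Delta\pi$ and $\Delta\pi'$ are both affine on the finitely many faces of $\Delta\mathcal P$, the function $(1+\epsilon)\Delta\pi-\epsilon\Delta\pi'$ is affine on each face, so its sign is decided at the finitely many vertices: at a vertex $v$ either $\Delta\pi(v)>0$, where the inequality holds once $\epsilon$ is small, or $\Delta\pi(v)=0$, where $v\in E(\pi)\subseteq E(\pi')$ gives $\Delta\pi'(v)=0$ and hence value $0$. Finiteness therefore yields a single $\epsilon>0$ making $\pi^-$ subadditive; nonnegativity is obtained the same way on the one-dimensional breakpoint complex, using that $\pi(b)=0$ forces $\pi'(b)=0$ (from the inherited additivities $\pi'(nb)=n\pi'(b)$ and the boundedness $\pi'\le1$). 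Then $\pi^-$ is valid, $\pi=\tfrac12(\pi^++\pi^-)$ exhibits $\pi$ as a midpoint of valid functions, and extremality forces $\pi'=\pi$; by \autoref{lemma:facet_theorem} this proves $\pi$ is a facet, completing the equality $\{\text{extreme functions}\}=\{\text{facets}\}$.
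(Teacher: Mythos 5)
Your reduction of the hard direction is the same as the paper's (via \autoref{lemma:facet_theorem}, show that any minimal valid $\pi'$ with $E(\pi)\subseteq E(\pi')$ yields an effective perturbation $\bar\pi=\pi'-\pi$, then let extremality force $\bar\pi\equiv 0$), and your easy direction is the standard argument the paper cites. But there is a genuine gap at the step you yourself flag as the crux: you claim that extremality of $\pi$ forces every linearity interval of $\pi$ to be covered, because ``an uncovered interval would support a nonzero perturbation $\bar\pi$ with $\pi\pm\epsilon\bar\pi$ valid.'' That claim is false precisely in the regime this theorem is about, namely irrational breakpoints. The function \sagefunc{bhk_irrational} of \cite{basu-hildebrand-koeppe:equivariant} is a continuous piecewise linear \emph{extreme} function with irrational breakpoints that \emph{has} uncovered intervals: on those intervals the relevant group of moves is dense, and every effective perturbation is killed by an equivariance argument, not by covering. (For rational breakpoints the implication ``uncovered $\Rightarrow$ not extreme'' does hold, via finite oversampling; this is exactly why \cite[Proposition 2.8]{igp_survey} carried the rationality hypothesis, and why your route cannot remove it.) Once this step fails, you cannot conclude that $\pi'$ is piecewise linear over $\P$, and your endgame collapses: deciding the sign of $(1+\epsilon)\Delta\pi-\epsilon\Delta\pi'$ at the finitely many vertices of $\Delta\P$ requires $\Delta\pi'$ to be affine on each face of $\Delta\P$, which you no longer know. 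Without that regularity, $\Delta\pi$ can tend to $0$ near the boundary of $E(\pi)$ while $\Delta\bar\pi$ need not tend to $0$ at a comparable rate, so no single $\epsilon>0$ is guaranteed.

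The paper's proof sidesteps piecewise linearity of $\pi'$ entirely, and this is the essential idea you are missing. Since $\pi$ is continuous piecewise linear with $\pi(0)=\pi(1)=0$, there is $\delta>0$ with $\Delta\pi\equiv 0$ on $[0,\delta]^2$ and on $[-\delta,0]^2$; the hypothesis $E(\pi)\subseteq E(\pi')$ transfers these additivities to $\pi'$; the Interval Lemma then makes $\pi'$ affine on $[0,\delta]$ and $[-\delta,0]$, and subadditivity upgrades this to \emph{global Lipschitz continuity} of $\pi'$, hence of $\bar\pi=\pi'-\pi$. The paper then invokes \autoref{lemma:lipschitz-equiv-perturbation}: any Lipschitz (indeed piecewise Lipschitz) perturbation in $\bar\Pi^{E_{\bullet}}(\R,\Z)$ is effective. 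Its proof replaces your finite vertex check by the quantitative estimate of \autoref{lemma:affine-function-min-value}, which bounds $\Delta\pi_F$ from below by a multiple of the distance to its zero set and bounds $|\Delta\bar\pi_F|$ from above by a multiple of that same distance --- exactly the comparison your argument lacks when the perturbation is merely Lipschitz rather than piecewise linear. If you want to salvage your outline, replace the covering claim by this Lipschitz argument; as written, the proof does not go through.
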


Then we investigate the notions of facets and weak
facets in the case of discontinuous functions.  This appears to be a first in
the published literature.  All papers that consider discontinuous functions
only used the notion of extreme functions; see Appendix~\ref{appendix:discont-lit}
for a discussion.
We give \textbf{three discontinuous functions that furnish the separation of the three
notions} (\autoref{lemma:discontinuous_examples}): 
A function $\psi$ that is extreme, but is neither a weak facet nor a facet;
a function $\pi$ that is not an extreme function (nor a facet), but is a weak
facet;
and a function $\pi_{\mathrm{lifted}}$ that is extreme and a weak facet but is
not a facet.  

It remains an open question whether this separation can also be done using
continuous (non--piecewise linear) functions.

\section{Minimal valid functions and their perturbations}
\label{sec:notations}
Following \cite{igp_survey}, given a locally finite one-dimensional polyhedral complex $\P$, we call a function $\pi\colon \R \to \R$ \emph{piecewise linear} over $\P$, if it is affine linear over the relative interior of each face of the complex. Under this definition, piecewise linear functions can be discontinuous. We say the function $\pi$ is \emph{continuous piecewise linear} over $\P$ if it is affine over each of the cells of $\P$ (thus automatically imposing continuity).

Let $\pi$ be a minimal valid function.  
Define the subadditivity slack of $\pi$ as $\Delta\pi(x,y) := \pi(x) + \pi(y)
- \pi(x+y)$. Denote the \emph{additivity domain} of $\pi$ by
$$E(\pi) = \{(x,y) \st \Delta\pi(x,y) = 0\}.$$

To combinatorialize the additivity domains of piecewise linear subadditive
functions, we work with the two-dimensional polyhedral complex $\Delta \P 
$, 
whose faces are $F(I, J, K) = \{(x,y) \in \R \times \R \mid x \in I, y \in J,
x+y \in K\}$
for $I, J, K \in \P$.  
Define the projections $p_1,p_2,p_3\colon \R\times \R \to \R$ as
$p_1(x,y) = x$,  $p_2(x,y) = y$, $p_3(x,y) = x+y$.

In the continuous case, since the function $\pi$ is piecewise linear over
$\P$, we have that $\Delta \pi$ is affine linear over each face
$F \in \Delta P$.  
Let $\pi$ be a  minimal valid function for $R_f(\R, \Z)$ that is piecewise linear over $\P$. 
Following \cite{igp_survey}, we define the \emph{space of perturbation functions with prescribed additivities} $E = E(\pi)$ 
\begin{equation}
\bar{\Pi}^{E}(\R,\Z) = \left\{\bar{\pi} \colon \R \to \R \, \Bigg| \,
\begin{array}{r@{\;}c@{\;}ll}
\bar{\pi}(0) &=& 0 \\
\bar{\pi}(f) &=& 0 \\
\bar{\pi}(x) + \bar{\pi}(y) &=& \bar{\pi}(x+y) & \text{ for all } (x,y) \in E\\
\bar{\pi}(x) &=& \bar{\pi}(x+t) & \text{ for all } x \in \R,\, t \in \Z
\end{array} \right\}.
\label{eq:perturbation_space_simple}
\end{equation}

When $\pi$ is discontinuous, one also needs to consider the limit points where the subadditivity slacks are approaching zero. Let $F$ be a face of $\Delta \P $. For $(x,y)\in F$, we denote 
\[\Delta\pi_F(x,y) := 
\lim_{\substack{(u,v) \to (x,y)\\ (u,v) \in \relint(F)}} \Delta\pi(u,v).\]
Define 
\[E_F(\pi) = \{\,(x,y)\in F \st \Delta\pi_F(x,y) \text{ exists, and } \Delta\pi_F(x,y) = 0\,\}.\]
Notice that in the above definition of $E_F(\pi)$, we include the condition that
the limit denoted by $\Delta\pi_F(x,y)$ exists, so that this definition can as well be applied to
functions $\pi$ (and $\bar\pi$) that are not piecewise linear over $\P$. 

We denote by $E_{\bullet}(\pi, \P)$ the family of sets $E_F(\pi)$, indexed by $F\in \Delta\P$.
Define the \emph{space of perturbation functions with prescribed additivities and limit-additivities} $E_{\bullet} = E_{\bullet}(\pi, \P)$
\begin{equation}
\bar{\Pi}^{E_{\bullet}}(\R,\Z) = \left\{\bar{\pi} \colon \R \to \R \, \Bigg| \,
\begin{array}{r@{\;}c@{\;}ll}
\bar{\pi}(0) &=& 0 \\
\bar{\pi}(f) &=& 0 \\
\Delta\bar{\pi}_F(x, y) &=& 0 & \text{ for } (x,y) \in E_F, \ F \in \Delta\P\\
\bar{\pi}(x) &=& \bar{\pi}(x+t) & \text{ for } x \in \R,\, t \in \Z
\end{array} \right\}.
\label{eq:perturbation_space}
\end{equation}
\begin{remark}
Let $\bar{\pi} \in \bar{\Pi}^E(\R,\Z)$. The third condition of \eqref{eq:perturbation_space_simple} is equivalent to $E(\pi) \subseteq E(\bar{\pi})$. 
Let $\bar{\pi} \in \bar{\Pi}^{E_{\bullet}}(\R,\Z)$. The third condition of \eqref{eq:perturbation_space} is equivalent to $E_F(\pi) \subseteq E_F(\bar{\pi})$ for all faces $F\in \Delta\P$, which is stronger than $E(\pi) \subseteq E(\bar{\pi})$ in \eqref{eq:perturbation_space_simple}.
Thus, in general, $\bar{\Pi}^{E_{\bullet}}(\R,\Z) \subseteq \bar{\Pi}^E(\R,\Z)$.  
If $\pi$ is continuous, then $E(\pi) \subseteq E(\bar{\pi})$ implies that $E_F(\pi) \subseteq E_F(\bar{\pi})$ for all faces $F\in \Delta\P$, hence $\bar{\Pi}^{E_{\bullet}}(\R,\Z) = \bar{\Pi}^E(\R,\Z)$.  
\end{remark}

\section{Effective perturbation functions}

Following \cite{koeppe-zhou:crazy-perturbation}, 
we define the \emph{space of effective perturbation functions}
\begin{equation}
\tilde{\Pi}^{\pi}(\R,\Z) = \left\{\,\tilde{\pi} \colon \R \to \R \, \mid \, \exists \, \epsilon>0 \text{ s.t.\ } \pi^{\pm} = \pi \pm \epsilon\tilde{\pi} \text{ are minimal valid}\,\right\}.
\label{eq:effective-perturbation-space}
\end{equation}
Because of \cite[Lemma 2.11\,(i)]{igp_survey}, a function $\pi$ is
extreme if and only if $\tilde{\Pi}^{\pi}(\R,\Z) = \left\{ 0\right\}$. Note
that every
function $\tilde{\pi} \in \tilde{\Pi}^{\pi}(\R,\Z)$ is bounded
.

It is clear that
if $\tilde\pi \in \tilde{\Pi}^{\pi}(\R,\Z)$, then $\tilde\pi \in
\bar{\Pi}^{E_{\bullet}}(\R,\Z)$, where $E_{\bullet} = E_{\bullet}(\pi, \P)$; see
\cite[Lemma 2.7]{basu-hildebrand-koeppe:equivariant} or
\cite[{Lemma~\ref{crazy:lemma:tight-implies-tight}}]{koeppe-zhou:crazy-perturbation}.

The other direction does not hold in general, but requires additional
hypotheses.  Let $\bar\pi \in \bar{\Pi}^{E_{\bullet}}(\R,\Z)$.  In
\cite[Theorem 3.13]{bhk-IPCOext} (see also \cite[Theorem 3.13]{igp_survey}),
it is proved that if $\pi$ and $\bar\pi$ are continuous
and $\bar\pi$ is piecewise linear, we have
$\bar\pi \in \tilde{\Pi}^{\pi}(\R,\Z)$.  (Similar arguments also appeared in
the earlier literature, for example in the proof of \cite[Theorem
3.2]{basu-hildebrand-koeppe:equivariant}.)

We will need a more general version of this result. 
Consider the following definition.
Given a locally finite one-dimensional polyhedral complex $\mathcal{P}$, 
we call a function $\bar\pi\colon \R \to \R$ \textit{piecewise Lipschitz continuous} over~$\mathcal{P}$, if it is Lipschitz continuous over the relative interior of each face of the complex. Under this definition, piecewise Lipschitz continuous functions can be discontinuous.
\begin{theorem}
\label{lemma:lipschitz-equiv-perturbation}
Let $\pi$ be a minimal valid function that is piecewise linear over a
polyhedral complex $\P$. Let $\bar\pi \in \bar\Pi^{E_{\bullet}}(\R,\Z)$ be a
perturbation function, where $E_{\bullet} = E_{\bullet}(\pi, \P)$. Suppose
that $\bar\pi$ is piecewise Lipschitz continuous over $\P$. Then $\bar\pi$ is
an effective perturbation function, $\bar\pi \in
\tilde \Pi^{\pi}(\R,\Z)$.  
\end{theorem}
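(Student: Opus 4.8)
The plan is to produce a single $\epsilon>0$ for which both $\pi^\pm:=\pi\pm\epsilon\bar\pi$ are minimal valid, and then conclude $\bar\pi\in\tilde\Pi^{\pi}(\R,\Z)$ directly from the definition \eqref{eq:effective-perturbation-space}. I would verify minimality of $\pi^\pm$ through the Gomory--Johnson criterion: a bounded function is minimal valid iff it is periodic modulo~$1$, vanishes at~$0$, is subadditive, and satisfies the symmetry condition $\pi^\pm(x)+\pi^\pm(f-x)=1$. Of these, periodicity and the value at~$0$ transfer for free from $\bar\pi(0)=0$ and the periodicity of $\bar\pi$; boundedness of $\pi^\pm$ holds because $\pi\in[0,1]$ and $\bar\pi$, being piecewise Lipschitz over a locally finite complex and periodic, is bounded on a fundamental domain; and nonnegativity of $\pi^\pm$ is then automatic, since subadditivity gives $\pi^\pm(nx)\le n\,\pi^\pm(x)$, so a negative value would contradict boundedness. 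Thus only symmetry and subadditivity require real work.

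For symmetry it suffices to show $\bar\pi(x)+\bar\pi(f-x)=0$ for every~$x$. Here I would note that, since $f$ is a breakpoint, the point $(x,f-x)$ lies in the relative interior of a face $F=F(I,J,\{f\})$ of $\Delta\P$ on which $p_3\equiv f$. Because $\pi$ is minimal, $\Delta\pi(x,f-x)=0$, whence $(x,f-x)\in E_F(\pi)$, so the defining condition of $\bar\Pi^{E_\bullet}(\R,\Z)$ forces $\Delta\bar\pi_F(x,f-x)=0$. Unwinding this limit gives the actual-value identity $\bar\pi(x)+\bar\pi(f-x)-\bar\pi(f)=0$: along $\relint(F)$ the first two arguments approach $x$ and $f-x$ inside relative interiors of cells (or are themselves $0$-cells) where $\bar\pi$ is continuous, while the third argument is constantly~$f$; this is the point where the discontinuous case needs the extra care that is absent from the continuous theorem \cite[Theorem 3.13]{bhk-IPCOext}. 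Since $\bar\pi(f)=0$, symmetry of $\pi^\pm$ follows.

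The substance is subadditivity, $\Delta\pi^\pm(x,y)=\Delta\pi(x,y)\pm\epsilon\,\Delta\bar\pi(x,y)\ge0$ everywhere. By the $\Z^2$-periodicity of $\Delta\pi$ and $\Delta\bar\pi$ and the local finiteness of $\P$, there are finitely many orbits of faces $F\in\Delta\P$, and every point lies in the relative interior of a unique (compact) face; so I reduce to checking the inequality on $\relint(F)$ for representatives $F$, where $\Delta\pi=\Delta\pi_F$ and $\Delta\bar\pi=\Delta\bar\pi_F$. If $E_F(\pi)=F$, the perturbation condition gives $\Delta\bar\pi_F\equiv0$ and there is nothing to check; if $E_F(\pi)=\emptyset$, the affine function $\Delta\pi_F$ is bounded below by a positive constant on the compact~$F$ while $\Delta\bar\pi_F$ is bounded, so small $\epsilon$ works. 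The crux is a face with $\emptyset\ne E_F(\pi)\subsetneq F$. There I would first show $\Delta\bar\pi_F$ is Lipschitz on~$F$ by writing $\Delta\bar\pi_F(x,y)=\bar\pi_I(x)+\bar\pi_J(y)-\bar\pi_K(x+y)$, with $\bar\pi_I,\bar\pi_J,\bar\pi_K$ the unique Lipschitz extensions to the closed cells of $\bar\pi$ restricted to their relative interiors; being Lipschitz and vanishing on $E_F$, it satisfies $|\Delta\bar\pi_F(p)|\le L\,\dist(p,E_F)$. On the other hand $\Delta\pi_F$ is affine, nonnegative, and vanishes exactly on $E_F=F\cap\{\Delta\pi_F=0\}$, so by Hoffman's error bound for the polyhedron $E_F$ there is a constant $C$ with $\dist(p,E_F)\le C\,\Delta\pi_F(p)$ for $p\in F$. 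Combining, $|\Delta\bar\pi_F|\le LC\,\Delta\pi_F$, hence $\Delta\pi^\pm_F\ge(1-\epsilon LC)\,\Delta\pi_F\ge0$ once $\epsilon\le(LC)^{-1}$; taking $\epsilon$ below the minimum over the finitely many face bounds yields subadditivity of both $\pi^\pm$.

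The main obstacle is exactly this last step: controlling the subadditivity slack of $\bar\pi$ by that of $\pi$ near the additivity subface $E_F$. This is precisely where piecewise Lipschitz continuity is indispensable — mere continuity of $\bar\pi$ would not force $\Delta\bar\pi_F$ to decay linearly toward $E_F$, and boundedness alone is useless because $\Delta\pi_F\to0$ there — and combining the Lipschitz decay of the numerator with the linear growth of $\Delta\pi_F$ (via the polyhedral distance estimate) is the heart of the argument.
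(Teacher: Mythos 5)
Your proof is correct and takes essentially the same approach as the paper's: choose $\epsilon>0$ so that $\pi^{\pm}=\pi\pm\epsilon\bar\pi$ are minimal valid, dispose of periodicity, the values at $0$ and $f$, symmetry, and nonnegativity-via-boundedness exactly as the paper does, and establish subadditivity face by face by playing the Lipschitz decay of $\Delta\bar\pi_F$ toward the zero set $E_F(\pi)$ of $\Delta\pi_F$ against the linear growth of the affine function $\Delta\pi_F$ away from that set. The only real difference is in packaging: where you invoke Hoffman's error bound to get $\dist(p,E_F)\le C\,\Delta\pi_F(p)$, the paper proves that estimate from scratch in an elementary geometric lemma (\autoref{lemma:affine-function-min-value}, via convex combinations of vertices), and it replaces your per-face constants (combined using $\Z$-periodicity) by the global constants $m$, $C$, $M$ with the explicit choice $\epsilon=\min\{m/M,\,m/(8C)\}$.
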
 
The proof appears in Appendix~\ref{appendix:omitted}.


\section{Extreme functions and facets}
In this section, we discuss the relations between the notions of extreme
functions and facets.
We first review the definition of a facet, following
\cite[section 2.2.3]{igp_survey}; cf.\ ibid.\ for a discussion of this notion
in the earlier literature, in particular 
\cite{tspace} 
and \cite{dey3}
.

Let $P(\pi)$ denote the set of functions $y\colon \R \to \Z_+$ with finite
support satisfying \[\sum_{r\in\R}r y(r) \in f + \Z \quad \text{ and } \quad
  \sum_{r\in\R}\pi(r)y(r)=1.\] A valid function $\pi$ is called a \emph{facet}
if for every valid function $\pi'$ such that $P(\pi) \subseteq P(\pi')$ we
have that $\pi' =\pi$
. Equivalently, a valid function $\pi$ is a facet if this condition holds
for all such \emph{minimal} valid functions $\pi'$ \cite{basu-hildebrand-koeppe-molinaro:k+1-slope}. 
\begin{remark}
In the discontinuous case, the additivity in the limit plays a role in
extreme functions, which are characterized by the non-existence of an
effective perturbation function $\tilde{\pi}\not\equiv 0$. However facets (and
weak facets, see the next section) are defined through $P(\pi)$, which does not capture the limiting
additive behavior of $\pi$. The additivity domain $E(\pi)$, which features in
the Facet Theorem as discussed below, also does not account for additivity in
the limit.
\end{remark}


A well known sufficient condition for facetness of a minimal valid function~$\pi$ is
the Gomory--Johnson Facet Theorem.  We have stated its strong form, due to
Basu--Hildebrand--K\"{o}ppe--Molinaro
\cite{basu-hildebrand-koeppe-molinaro:k+1-slope}, in the introduction as
\autoref{thm:facet-theorem-strong-form}. 
In order to prove our ``if and only if'' version, we need the following
lemma. 
\begin{lemma}
\label{lemma:P_pi_and_E_pi}
Let $\pi$ and $\pi'$ be minimal valid functions. Then
$E(\pi) \subseteq E(\pi')$ if and only if $P(\pi) \subseteq P(\pi')$. 
\end{lemma}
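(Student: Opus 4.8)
The plan is to prove the two implications separately, using the standard Gomory--Johnson characterization of minimal valid functions: both $\pi$ and $\pi'$ are $\Z$-periodic, satisfy $\pi(0)=0$ and $\pi(f)=1$, are subadditive (so $\Delta\pi(x,y)\ge 0$ and $\Delta\pi'(x,y)\ge 0$ for all $x,y$), and obey the symmetry relation $\pi(x)+\pi(f-x)=1$ for all $x$.

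For the forward implication, assume $E(\pi)\subseteq E(\pi')$ and take any $y\in P(\pi)$. I would list the support of $y$ with multiplicity as $s_1,\dots,s_n$, set $t_0=0$ and $t_j=t_{j-1}+s_j$, so that $t_n=\sum_r r\,y(r)\equiv f\pmod 1$. The telescoping identity $\sum_{j=1}^n \Delta\pi(t_{j-1},s_j)=\sum_{j=1}^n\pi(s_j)-\pi(t_n)$ then evaluates to $\langle\pi,y\rangle-\pi(f)=1-1=0$, using $\Z$-periodicity to get $\pi(t_n)=\pi(f)=1$. Since every slack is nonnegative, each must vanish, so $(t_{j-1},s_j)\in E(\pi)\subseteq E(\pi')$ and hence $\Delta\pi'(t_{j-1},s_j)=0$ for all $j$. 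Running the same telescoping identity for $\pi'$ gives $\langle\pi',y\rangle-\pi'(f)=0$, i.e.\ $\langle\pi',y\rangle=1$; as $y$ already meets the congruence condition, $y\in P(\pi')$.

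For the reverse implication, assume $P(\pi)\subseteq P(\pi')$ and take $(a,b)\in E(\pi)$, i.e.\ $\pi(a)+\pi(b)=\pi(a+b)$. The idea is to manufacture a tight feasible solution witnessing this additivity: let $y$ be the finite-support function whose value at each point is its multiplicity in the multiset $\{a,\,b,\,f-a-b\}$. Its combination $a+b+(f-a-b)=f$ satisfies the congruence, and by additivity at $(a,b)$ together with symmetry, $\pi(a)+\pi(b)+\pi(f-a-b)=\pi(a+b)+\pi(f-(a+b))=1$, so $y\in P(\pi)\subseteq P(\pi')$. Therefore $\pi'(a)+\pi'(b)+\pi'(f-a-b)=1$; applying the symmetry of $\pi'$ to replace $\pi'(f-(a+b))$ by $1-\pi'(a+b)$ yields $\pi'(a)+\pi'(b)=\pi'(a+b)$, that is, $(a,b)\in E(\pi')$.

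I expect the reverse direction to be the only step needing an idea, since it converts a pointwise additivity relation into a combinatorial incidence vector in $P(\pi)$; the crucial observation is that symmetry supplies the third element $f-a-b$ that completes $a,b$ into a tight feasible point. The remaining care is bookkeeping for the degenerate cases where $a$, $b$, $f-a-b$ (or $0$) coincide, which only changes multiplicities and leaves all the identities intact because $\pi(0)=\pi'(0)=0$.
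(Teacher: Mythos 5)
Your proof is correct, and it is worth noting how it relates to the paper's. For the direction $E(\pi)\subseteq E(\pi')\Rightarrow P(\pi)\subseteq P(\pi')$, your telescoping argument with the partial sums $t_j$ is exactly the paper's proof, just made explicit: the paper writes the single chain $1=\sum_i\pi(r_i)y(r_i)\geq\pi\bigl(\sum_i r_i y(r_i)\bigr)=\pi(f)=1$ and asserts that ``each subadditivity inequality here is tight,'' which is precisely your decomposition into the slacks $\Delta\pi(t_{j-1},s_j)\geq 0$ summing to zero. The real difference is in the converse direction $P(\pi)\subseteq P(\pi')\Rightarrow E(\pi)\subseteq E(\pi')$: the paper does not prove this at all, but cites it from Basu--Hildebrand--K\"oppe--Molinaro (Theorem~20) and the survey (Theorem~2.12), whereas you supply a direct argument, turning an additivity pair $(a,b)\in E(\pi)$ into the tight feasible solution supported on the multiset $\{a,\,b,\,f-a-b\}$ and using the symmetry condition of both minimal functions to pass the additivity to $\pi'$. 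This is essentially the standard proof of the cited result, so your write-up buys self-containedness at the cost of a few extra lines, while the paper's citation keeps the lemma short; your handling of the degenerate coincidences (merged multiplicities, $\pi(0)=\pi'(0)=0$) is also adequate. No gaps.
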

\begin{proof}
The ``if'' direction is proven in
\cite[Theorem 20]{basu-hildebrand-koeppe-molinaro:k+1-slope}; see also \cite[Theorem 2.12]{igp_survey}. 
We now show the ``only if" direction, using the subadditivity of $\pi$. Assume that $E(\pi) \subseteq E(\pi')$. Let $y \in P(\pi)$. Let $\{r_1, r_2, \dots, r_n\}$ denote the finite support of $y$. By definition, the function $y$ satisfies that $y(r_i) \in \Z_+$,  $\sum_{i=1}^n r_i y(r_i) \equiv f \pmod 1$, and $\sum_{i=1}^n \pi(r_i) y(r_i) = 1$. 
Since $\pi$ is a minimal valid function, we have that 
\(1 = \sum_{i=1}^n \pi(r_i) y(r_i) \geq \pi(\sum_{i=1}^n r_i y(r_i)) = \pi(f) = 1.\) 
Thus, each subadditivity inequality here is tight for $\pi$, and is also tight for $\pi'$ since $E(\pi) \subseteq E(\pi')$.  We obtain \(\sum_{i=1}^n \pi'(r_i) y(r_i) = \pi'(\sum_{i=1}^n r_i y(r_i)) = \pi'(f) = 1,\) which implies that $y \in P(\pi')$. Therefore, $P(\pi) \subseteq P(\pi')$.
\end{proof}

\begin{theorem}[Facet Theorem, ``if and only if'' version]
\label{lemma:facet_theorem}
A minimal valid function $\pi$ is a facet if and only if for every minimal valid function $\pi'$, $E(\pi) \subseteq E(\pi')$ implies $\pi'=\pi$. 
\end{theorem}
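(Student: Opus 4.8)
The plan is to deduce the statement directly from \autoref{lemma:P_pi_and_E_pi}, which translates the containment $E(\pi)\subseteq E(\pi')$ into $P(\pi)\subseteq P(\pi')$ for minimal valid functions, together with the equivalent formulation of the facet property recorded just above: $\pi$ is a facet if and only if, for every \emph{minimal} valid function $\pi'$ with $P(\pi)\subseteq P(\pi')$, one has $\pi'=\pi$ (the restriction to minimal $\pi'$ is justified in \cite{basu-hildebrand-koeppe-molinaro:k+1-slope}). With these two facts in hand, the theorem becomes a matter of substituting one containment for the other.

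For the forward direction, I would assume $\pi$ is a facet and take an arbitrary minimal valid function $\pi'$ with $E(\pi)\subseteq E(\pi')$. By the ``if'' direction of \autoref{lemma:P_pi_and_E_pi}, this yields $P(\pi)\subseteq P(\pi')$, and then the facet property (in its minimal-function form) forces $\pi'=\pi$, which is exactly the desired implication. For the converse, I would assume the stated implication on additivity domains and verify the minimal-function form of the facet definition: given any minimal valid $\pi'$ with $P(\pi)\subseteq P(\pi')$, the ``only if'' direction of \autoref{lemma:P_pi_and_E_pi} gives $E(\pi)\subseteq E(\pi')$, whence the hypothesis delivers $\pi'=\pi$. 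Hence $\pi$ is a facet.

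Since all the analytic content sits in \autoref{lemma:P_pi_and_E_pi} --- whose ``only if'' direction was just established from subadditivity and whose ``if'' direction is imported from \cite{basu-hildebrand-koeppe-molinaro:k+1-slope} --- I do not expect a genuine obstacle in this proof itself; it is essentially a formal equivalence obtained by chaining the two biconditionals. The one point demanding care is the quantifier over $\pi'$: the bare definition of facet ranges over \emph{all} valid functions, whereas \autoref{lemma:P_pi_and_E_pi} applies only to minimal ones. This is precisely why I would work throughout with the minimal-function reformulation of the facet condition, which is legitimate by the cited equivalence, rather than with the unrestricted definition; the whole argument then closes in a few lines.
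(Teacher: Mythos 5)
Your proposal is correct and takes essentially the same route as the paper: both arguments reduce the theorem to \autoref{lemma:P_pi_and_E_pi} together with facts imported from \cite{basu-hildebrand-koeppe-molinaro:k+1-slope}. The only (cosmetic) difference is that for the ``if'' direction the paper cites the strong form of the Facet Theorem (\autoref{thm:facet-theorem-strong-form}) as a black box, whereas you re-derive exactly that statement by combining the minimal-valid-function reformulation of facetness with the ``if'' direction of \autoref{lemma:P_pi_and_E_pi}; the mathematical content is identical.
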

\begin{proof}
It follows from the Facet Theorem in the strong form
(\autoref{thm:facet-theorem-strong-form}) and \autoref{lemma:P_pi_and_E_pi}.
\end{proof}

In \cite[page 25, section 3.6]{igp_survey}, the Facet Theorem is reformulated in terms of
perturbation functions.  In Appendix~\ref{appendix:facet-theorem-with-perturbation}
we prove an ``if and only if'' result for this reformulation as well. \medbreak

Now we come to the proof of a main theorem stated in the introduction.

\begin{proof}[of \autoref{lemma:cont_pwl_extreme_is_facet}]
Let $\pi$ be a continuous piecewise linear minimal valid function. As
mentioned in \cite[section 2.2.4]{igp_survey}, \cite[Lemma
1.3]{basu-hildebrand-koeppe-molinaro:k+1-slope}
showed that if $\pi$ is a facet, then $\pi$ is extreme.

We now prove the other direction by contradiction. Suppose that $\pi$ is extreme, but is not a facet. 
Then by \autoref{lemma:facet_theorem}, there exists a minimal valid function $\pi' \neq \pi$ such that $E(\pi)\subseteq E(\pi')$. 
Since $\pi$ is continuous piecewise linear and $\pi(0)=\pi(1)=0$, 
there exists $\delta >0$ such that $\Delta\pi(x,y) =0$ and $\Delta\pi(-x, -y) =0$ for $0 \leq x, y \leq \delta$. The condition $E(\pi) \subseteq E(\pi')$ implies that $\Delta\pi'(x,y) =0$ and $\Delta\pi'(-x, -y) =0$ for $0 \leq x, y \leq \delta$ as well. As the function $\pi'$ is bounded, it follows from the Interval Lemma (see \cite[Lemma 4.1]{igp_survey}, for example) that $\pi'$ is affine linear on $[0, \delta]$ and on $[-\delta, 0]$. We also know that $\pi'(0)=0$ as $\pi'$ is minimal valid. Using the subadditivity, we obtain that $\pi'$ is Lipschitz continuous.  Let $\bar\pi = \pi' -\pi$. Then $\bar\pi \not\equiv 0$, $\bar\pi \in \bar\Pi^E(\R, \Z)$ where $E = E(\pi)$, and $\bar\pi$ is Lipschitz continuous. Since $\pi$ is continuous, we have $\bar\Pi^E(\R, \Z) = \bar\Pi^{E_{\bullet}}(\R,\Z)$. By \autoref{lemma:lipschitz-equiv-perturbation}, there exists $\epsilon>0$ such that $\pi^\pm = \pi \pm \epsilon\bar\pi$ are distinct minimal valid functions. This contradicts the assumption that $\pi$ is an extreme function. 

Therefore, $\{$extreme functions$\} = \{$facets$\}$.
\end{proof}



\section{Weak facets}

We first review the definition of a weak facet, following
\cite[section 2.2.3]{igp_survey}; cf.\ ibid.\ for a discussion of this notion
in the earlier literature, in particular 
\cite{tspace} 
and \cite{dey3}
.
A valid function $\pi$ is called a \emph{weak facet} if for every valid
function $\pi'$ such that $P(\pi)\subseteq P(\pi')$ we have $P(\pi)=P(\pi')$.

As we mentioned above, to prove that $\pi$ is an extreme function or is a
facet, it suffices to consider $\pi'$ that is minimal valid. The following
lemma shows it is also the case in the definition of weak facets. 

\begin{lemma}
\label{lemma:weak_facet_minimal}
\begin{enumerate}[\rm(1)]
\item Let $\pi$ be a valid function. If $\pi$ is a weak facet, then $\pi$ is minimal valid.
\item Let $\pi$ be a minimal valid function. Suppose that for every minimal valid function $\pi'$, we have that $P(\pi)\subseteq P(\pi')$ implies $P(\pi)=P(\pi')$. Then $\pi$ is a weak facet.
\item A minimal valid function $\pi$ is a weak facet if and only if for every
  minimal valid function $\pi'$, we have that $E(\pi)\subseteq E(\pi')$
  implies $E(\pi)=E(\pi')$.
\end{enumerate}
\end{lemma}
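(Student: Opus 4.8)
The plan rests on two elementary facts about valid functions, used repeatedly. The first is a \emph{domination lemma}: if $\phi$ and $\psi$ are valid functions with $\psi\le\phi$ pointwise, then $P(\phi)\subseteq P(\psi)$. This is immediate, since for $y\in P(\phi)$ one has $1\le\langle\psi,y\rangle\le\langle\phi,y\rangle=1$, where the first inequality is validity of $\psi$, the second uses $\psi\le\phi$ together with $y\ge 0$, and the last is $y\in P(\phi)$; hence $\langle\psi,y\rangle=1$. The second is the well-known fact that every valid function is dominated by some minimal valid function. Together with \autoref{lemma:P_pi_and_E_pi}, these drive all three parts.

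For part~(1), I would choose a minimal valid function $\pi^\ast\le\pi$. If $\pi$ were not minimal, then $\pi^\ast\ne\pi$, so there is a point $r_0$ with $\pi^\ast(r_0)<\pi(r_0)$; by the domination lemma $P(\pi)\subseteq P(\pi^\ast)$, so the weak-facet property applied to $\pi'=\pi^\ast$ would force $P(\pi)=P(\pi^\ast)$. The crux is to contradict this by producing some $y\in P(\pi^\ast)\setminus P(\pi)$, and the symmetry condition $\pi^\ast(x)+\pi^\ast(f-x)=1$ of a minimal function supplies it: setting $y(r_0)=y(f-r_0)=1$ (or $y(r_0)=2$ in the degenerate case $2r_0\equiv f\pmod 1$) satisfies $\sum_r r\,y(r)\equiv f\pmod 1$ and gives $\langle\pi^\ast,y\rangle=\pi^\ast(r_0)+\pi^\ast(f-r_0)=1$, so $y\in P(\pi^\ast)$, whereas $\langle\pi,y\rangle\ge\pi(r_0)+\pi^\ast(f-r_0)>1$, so $y\notin P(\pi)$. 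This contradiction shows $\pi=\pi^\ast$ is minimal.

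For part~(2), let $\pi'$ be an arbitrary valid function with $P(\pi)\subseteq P(\pi')$; I must show the reverse inclusion. I would pick a minimal valid $\pi''\le\pi'$, so that $P(\pi')\subseteq P(\pi'')$ by the domination lemma, yielding the chain $P(\pi)\subseteq P(\pi')\subseteq P(\pi'')$. Since $\pi''$ is minimal and $P(\pi)\subseteq P(\pi'')$, the hypothesis gives $P(\pi)=P(\pi'')$; the chain then collapses to $P(\pi)=P(\pi')$, as required.

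For part~(3), I would pass between $P$ and $E$ via \autoref{lemma:P_pi_and_E_pi}: for minimal valid $\pi,\pi'$ it gives $E(\pi)\subseteq E(\pi')\iff P(\pi)\subseteq P(\pi')$, and hence, applying it in both directions, $E(\pi)=E(\pi')\iff P(\pi)=P(\pi')$. Thus the $E$-condition in~(3) is equivalent to the statement ``for every minimal valid $\pi'$, $P(\pi)\subseteq P(\pi')$ implies $P(\pi)=P(\pi')$''; by part~(2) this implies $\pi$ is a weak facet, while conversely a weak facet satisfies it by restricting the defining quantifier to minimal $\pi'$. The main obstacle, I expect, is the second elementary fact above: in the infinite-dimensional setting the existence of a minimal valid function below a given valid function is not entirely trivial (it requires, e.g., a Zorn's-lemma argument checking that the pointwise infimum of a descending chain of valid functions remains valid), so I would either cite it or prove it as a preliminary lemma; the only other point needing care is the degenerate case $2r_0\equiv f\pmod 1$ in part~(1).
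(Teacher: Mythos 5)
Your proposal is correct and takes essentially the same route as the paper's own proof: part (1) via domination by a minimal valid function $\pi^\ast\le\pi$ and the two-point configuration $y(x_0)=y(f-x_0)=1$ exploiting the symmetry condition, part (2) by collapsing the chain $P(\pi)\subseteq P(\pi')\subseteq P(\pi'')$ through a minimal function dominating $\pi'$, and part (3) by translating between $P$ and $E$ via \autoref{lemma:P_pi_and_E_pi}. Your explicit treatment of the degenerate case $2r_0\equiv f\pmod 1$ (taking $y(r_0)=2$) is a minor refinement that the paper's proof passes over silently.
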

\begin{proof}
(1) Suppose that $\pi$ is not minimal valid. Then, by \cite[Theorem
1]{basu-hildebrand-koeppe-molinaro:k+1-slope}, $\pi$ is dominated by another
minimal valid function $\pi'$, with $\pi(x_0) > \pi'(x_0)$ at some $x_0$. 
Let $y \in P(\pi)$. We have \[1 = \sum\pi(r_i)y(r_i) \geq \sum\pi'(r_i)y(r_i) \geq \pi'(\sum r_i y(r_i)) = \pi'(f) =1.\] Hence equality holds throughout, implying that $y \in P(\pi')$. Therefore, $P(\pi)\subseteq P(\pi')$. Now consider $y$ with $y(x_0)=y(f-x_0)=1$ and $y(x)=0$ otherwise. It is easy to see that $y \in P(\pi')$, but $y \not\in P(\pi)$ since $\pi(x_0)+\pi(f-x_0) > \pi'(x_0)+\pi'(f-x_0) = 1$. Therefore, $P(\pi)\subsetneq P(\pi')$, a contradiction to the weak facet assumption on $\pi$.\smallbreak

(2) Consider any valid function $\pi^*$ (not necessarily minimal) such that $P(\pi) \subseteq P(\pi^*)$. Let $\pi'$ be a minimal function that dominates $\pi^*$: $\pi' \leq \pi^*$. From (1) we know that $P(\pi^*) \subseteq P(\pi')$. Thus, $P(\pi)\subseteq P(\pi')$. By hypothesis,  we have that $P(\pi)=P(\pi^*)=P(\pi')$. Therefore, $\pi$ is a weak facet.\smallskip

(3) Direct consequence of (2) and \autoref{lemma:P_pi_and_E_pi}.
\end{proof}


\begin{theorem}\label{thm:implications-of-pwl-effective-perturbation}
Let $\mathcal{F}$ be a family of functions such that existence of an effective perturbation implies existence of a piecewise linear effective perturbation. Let $\pi$ be a continuous piecewise linear function (not necessarily with rational breakpoints) such that  $\pi \in \mathcal{F}$. The following are equivalent.
(1) $\pi$ is extreme, (2) $\pi$ is a facet, (3) $\pi$ is a weak facet.
\end{theorem}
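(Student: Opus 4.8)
The plan is to establish the cycle (1) $\Rightarrow$ (2) $\Rightarrow$ (3) $\Rightarrow$ (1); two of the three arrows are essentially free, and all the content sits in the last one. For (1) $\Rightarrow$ (2) I would simply quote \autoref{lemma:cont_pwl_extreme_is_facet}, which already gives $\{$extreme$\}=\{$facets$\}$ for continuous piecewise linear functions, so that the membership $\pi\in\mathcal{F}$ is not even needed here. For (2) $\Rightarrow$ (3) I would use the purely formal observation that a facet is a weak facet: if $\pi$ is a facet and $\pi'$ is any valid function with $P(\pi)\subseteq P(\pi')$, then by the definition of facet $\pi'=\pi$, whence in particular $P(\pi)=P(\pi')$, which is exactly the weak-facet condition. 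Thus the continuity and piecewise linearity of $\pi$ together with the defining property of $\mathcal{F}$ are used only for the implication (3) $\Rightarrow$ (1).

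For (3) $\Rightarrow$ (1) I would argue the contrapositive: assuming $\pi$ is not extreme, I construct a minimal valid function $\pi^*$ with $E(\pi)\subsetneq E(\pi^*)$, which by \autoref{lemma:weak_facet_minimal}(3) shows that $\pi$ is not a weak facet. Since $\pi$ is not extreme we have $\tilde\Pi^{\pi}(\R,\Z)\neq\{0\}$, so there is a nonzero effective perturbation; because $\pi\in\mathcal{F}$, I may assume this perturbation $\tilde\pi$ is piecewise linear, and then $\pi^\pm=\pi\pm\epsilon\tilde\pi$ are minimal valid for some $\epsilon>0$. Passing to a common refinement of $\P$ and the complex over which $\tilde\pi$ is piecewise linear, I may assume $\pi$ and $\tilde\pi$ are both piecewise linear over a single locally finite complex, so that $\Delta\pi$ and $\Delta\tilde\pi$ are affine on the relative interior of each face of $\Delta\P$.

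The core is a push-to-the-boundary argument along the ray $\pi+t\tilde\pi$, $t\geq 0$. Recall that minimal validity is equivalent to subadditivity together with the affine conditions $\pi(0)=0$ and the symmetry $\pi(x)+\pi(f-x)=1$; since $\pi^\pm$ are minimal valid, $\tilde\pi$ satisfies the homogeneous versions $\tilde\pi(0)=0$, $\tilde\pi(x)+\tilde\pi(f-x)=0$, and $\Z$-periodicity, so these affine constraints are preserved for every $t$. Hence $\pi+t\tilde\pi$ is minimal valid precisely when it is subadditive, and I set $t_{\max}=\sup\{\,t\geq 0 : \pi+t\tilde\pi \text{ is subadditive}\,\}$. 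Because minimal valid functions are bounded ($0\leq\pi\leq1$) while $\tilde\pi\not\equiv 0$, the ray must leave the set of minimal valid functions, so $\epsilon\leq t_{\max}<\infty$, and by closedness $\pi^*:=\pi+t_{\max}\tilde\pi$ is minimal valid. Local finiteness of the complex makes the subadditivity slack of $\pi^*$ attain its infimum at an honest point $(x^*,y^*)$ with $\Delta\pi^*(x^*,y^*)=0$ and $\Delta\tilde\pi(x^*,y^*)<0$ (this is the constraint that blocks any further increase of $t$). The identity $\Delta\pi(x^*,y^*)=\Delta\pi^*(x^*,y^*)-t_{\max}\,\Delta\tilde\pi(x^*,y^*)=-t_{\max}\,\Delta\tilde\pi(x^*,y^*)>0$ then shows $(x^*,y^*)\in E(\pi^*)\setminus E(\pi)$, giving the desired strict inclusion $E(\pi)\subsetneq E(\pi^*)$.

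The step I expect to be the main obstacle is precisely the claim that at $t=t_{\max}$ the blocking subadditivity constraint is attained at an honest point $(x^*,y^*)$, rather than being merely approached in a limit; a priori one could imagine $E(\pi^*)=E(\pi)$ with the slack infimum equal to zero but not attained, in which case no single $\pi^*$ would witness the failure of the weak-facet property. This is exactly where piecewise linearity, supplied by the hypothesis on $\mathcal{F}$, is indispensable: since $\Delta\pi^*$ is affine on the relative interior of each of the finitely many faces of $\Delta\P$ (modulo periodicity), the slack infimum is attained, and $t\mapsto\inf_{(x,y)}\Delta(\pi+t\tilde\pi)(x,y)$ is concave and piecewise linear in the single scalar $t$, reducing the whole analysis to finite dimensions. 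Some care is still required because $\tilde\pi$, and hence $\pi^*$, may be discontinuous even though $\pi$ is continuous, so the slack must be controlled at face boundaries (where jumps occur) and not only on relative interiors; the clean slack identity above is what lets me conclude strictness working entirely with the genuine additivity domain $E$, sidestepping any need to track the limit data $E_{\bullet}$ in the final comparison.
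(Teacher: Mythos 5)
Your overall route is the same as the paper's: the two easy arrows are handled identically (the paper phrases them as $\{\text{facets}\} \subseteq \{\text{extreme functions}\} \cap \{\text{weak facets}\}$ plus \autoref{lemma:cont_pwl_extreme_is_facet}), and all the content is in ``weak facet $\Rightarrow$ extreme,'' proved by scaling a piecewise linear perturbation out to the boundary of subadditivity and invoking \autoref{lemma:weak_facet_minimal}(3); your $t_{\max}$ supremum is the paper's explicit minimum $\epsilon = \min\{\Delta\pi(x,y)/\Delta\bar\pi(x,y) \st (x,y)\in\verts(\Delta\P),\ \Delta\bar\pi(x,y)>0\}$ in disguise. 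However, there is a genuine gap, and it sits exactly at the step you single out as the main obstacle. Your claim that ``local finiteness of the complex makes the subadditivity slack of $\pi^*$ attain its infimum at an honest point'' is false for a \emph{discontinuous} piecewise linear perturbation $\tilde\pi$, and the hypothesis on $\mathcal{F}$ only guarantees a piecewise linear one, not a continuous one. For discontinuous $\tilde\pi$, subadditivity of $\pi+t\tilde\pi$ on the relative interior of a face $F\in\Delta\P$ is governed by the \emph{limit} values $\Delta(\pi+t\tilde\pi)_F(v)$ at the vertices $v$ of $F$, which need not equal the honest values $\Delta(\pi+t\tilde\pi)(v)$. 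Hence the constraint that becomes tight at $t=t_{\max}$ may be a limit constraint only: $\Delta\pi^*_F(v^*)=0$ while $\Delta\pi^*(x,y)>0$ at every honest point. In that case $E(\pi^*)=E(\pi)$ is entirely possible---the new additivity appears only in $E_{\bullet}(\pi^*)$---and \autoref{lemma:weak_facet_minimal}(3), which is stated in terms of the honest additivity domains $E$, yields no contradiction. Your appeal to the ``clean slack identity'' is circular here: that identity presupposes the honest blocking point $(x^*,y^*)$ whose existence is precisely what is in question.

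The missing ingredient, which is also how the paper closes this hole in a single line, is that every effective perturbation of a \emph{continuous} minimal valid function is automatically continuous, by \cite[Lemma 2.11]{igp_survey}: since $\pi^{\pm}=\pi\pm\epsilon\tilde\pi$ are minimal (hence nonnegative and subadditive) and $\pi^{+}+\pi^{-}=2\pi$ gives $\pi^{\pm}\leq 2\pi$, one gets $\lvert\pi^{\pm}(x+h)-\pi^{\pm}(x)\rvert \leq \max(\pi^{\pm}(h),\pi^{\pm}(-h)) \leq 2\max(\pi(h),\pi(-h)) \to 0$ as $h\to 0$, so $\pi^{\pm}$, and with them $\tilde\pi$, are continuous. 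Once $\tilde\pi$ is continuous and piecewise linear over a common refinement, limit values and honest values of the slack coincide, subadditivity along the ray is controlled by the finitely many vertices of $\Delta\P$ modulo $\Z^2$, and your push-to-the-boundary argument---now literally the paper's---goes through.
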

\begin{remark}
  In general, facets form a subset of the intersection of extreme functions
  and weak facets.  In the case of continuous piecewise linear functions with
  rational breakpoints, \cite[Proposition 2.8]{igp_survey} and
  \cite[Theorem~8.6]{igp_survey_part_2} proved that ``extreme
  $\Leftrightarrow$ facet''. Note that in this case, ``weak facet
  $\Rightarrow$ facet'' can be shown by restriction with oversampling to
  finite group problems. Thus (1), (2), (3) are equivalent when $\pi$ is a
  continuous piecewise linear function with rational breakpoints. See
  \cite[Figure 2]{igp_survey} for an illustration. As shown in
  \cite{basu-hildebrand-koeppe:equivariant} (for a stronger statement, see
  \cite[Theorem~8.6]{igp_survey_part_2}), the family of continuous piecewise
  linear function with rational breakpoints is such a family $\mathcal{F}$
  where existence of an effective perturbation implies existence of a
  piecewise linear effective perturbation. A forthcoming paper will
  investigate larger such families $\mathcal{F}$.
\end{remark}
\begin{proof}[of \autoref{thm:implications-of-pwl-effective-perturbation}]
By \autoref{lemma:cont_pwl_extreme_is_facet} and the fact that $\{$facets$\} \subseteq \{$extreme functions$\} \cap \{$weak facets$\}$, it suffices to show that $\{$weak facets$\} \subseteq \{$extreme functions$\}$.

Assume that $\pi$ is a weak facet, thus $\pi$ is minimal valid by \autoref{lemma:weak_facet_minimal}. We show that $\pi$ is extreme. For the sake of contradiction, suppose that $\pi$ is not extreme. By the assumption $\pi \in \mathcal{F}$, there exists a piecewise linear perturbation function $\bar\pi \not\equiv 0$ such that $\pi\pm\bar\pi$ are minimal valid functions. Furthermore, by \cite[Lemma 2.11]{igp_survey},  
we know that $\bar\pi$ is continuous, and $E(\pi)\subseteq E(\bar\pi)$. 
By taking the union of the breakpoints, 
we can define a common refinement, which will still be denoted by $\P$, of the complexes for $\pi$ and for $\bar\pi$. In other words, we may assume that $\pi$ and $\bar\pi$ are both continuous piecewise linear over $\P$.  
Since $\Delta\bar\pi\not\equiv0$, we may assume without loss of generality that 
$\Delta\bar\pi(x,y) > 0$ for some $(x,y)\in \verts(\Delta\P)$.
Define \[\epsilon = \min\left\lbrace \frac{\Delta\pi(x,y)}{\Delta\bar\pi(x,y)}\st (x,y)\in \verts(\Delta\P),  \Delta\bar\pi(x,y) > 0\right\rbrace.\]
Notice that $\epsilon >0$, since $\Delta\pi \geq 0$ and $E(\pi)\subseteq E(\bar\pi)$.
Let $\pi'=\pi - \epsilon\bar\pi$. Then $\pi'$ is a bounded continuous function piecewise linear over $\P$, such that $\pi' \neq \pi$. 

The function $\pi'$ is subadditive, since $\Delta\pi'(x,y) \geq 0$ for each $(x,y)\in \verts(\Delta\P)$. As in the proof of \autoref{lemma:lipschitz-equiv-perturbation}, it can be shown that $\pi'$ is non-negative, $\pi'(0)=0$, $\pi'(f)=1$, and that $\pi'$ satisfies the symmetry condition. Therefore, $\pi'$ is a minimal valid function. Let $(u,v)$ be a vertex of $\Delta\P$ satisfying $\Delta\bar\pi(u,v) > 0$ and $\Delta\pi(u,v) = \epsilon\Delta\bar\pi(u,v)$. We know that $\Delta\pi'(u,v)=\Delta\pi(u,v)-\epsilon\Delta\bar\pi(u,v)=0$, hence $(u,v)\in E(\pi')$. However, $(u,v)\not\in E(\pi)$, since $\Delta\bar\pi(u,v)> 0$ implies that $\Delta\pi(u,v)\neq 0$.  Therefore, $E(\pi)\subsetneq E(\pi')$. By \autoref{lemma:weak_facet_minimal}(3), we have that $\pi$ is not a weak facet, a contradiction.
\end{proof}

\section{Separation of the notions in the discontinuous case}


The definition of facets fails to account for
additivities-in-the-limit, which are a crucial feature of the extremality test
for discontinuous functions.  
This allows us to separate the two
notions.  Below we do this by observing that a discontinuous piecewise linear
extreme function from the literature,
\sagefunc{hildebrand_discont_3_slope_1}(), constructed by Hildebrand
(2013, unpublished; reported in \cite{igp_survey}), works as a separating
example.  

The other separations appear to require more complicated
constructions.  Recently, the authors constructed a two-sided discontinuous
piecewise linear minimal valid function,
\sage{kzh\_minimal\_has\_only\_crazy\_perturbation\_1}, which is not extreme,
but which is not a convex combination of other piecewise linear minimal valid
functions; see \cite{koeppe-zhou:crazy-perturbation} for the definition.
This function has two special ``uncovered'' pieces on the intervals $(l, u)$
and $(f-u, f-l)$, where $f = \frac{4}{5}$, $l=\frac{219}{800}$,
$u=\frac{269}{800}$, on which every nonzero perturbation is microperiodic
(invariant under the action of the dense additive group $T = \langle t_1,
t_2 \rangle_{\Z}$, where $t_1 = \frac{77}{7752}\sqrt{2}$, $t_2 = \frac{77}{2584}$). 
Below we prove that it furnishes another separation.  

For the remaining separation, we construct an extreme function
$\pi_{\mathrm{lifted}}$ as follows.  Define $\pi_{\mathrm{lifted}}$ by perturbing
the function $\pi = \sage{kzh\_minimal\_has\_only\_}$\allowbreak$\sage{crazy\_perturbation\_1()}$ on infinitely
many cosets of the group~$T$ on the two uncovered intervals as follows.  
\begin{equation}\label{eq:lifted_crazy_example}
      \pi_{\mathrm{lifted}}(x)= 
      \begin{cases} 
        \pi(x) & \text{if } x \not\in (l, u) \cup (f-u, f-l) \text{, or} \\
        & \text{if } x \in (l, u) \text{ such that } x \in C \text{, or} \\
        & \text{if } x \in (f-u, f-l) \text{ such that } f - x \in  C; \\
        \pi(x) + s & \text{if } x \in (l, u) \text{ such that } x \in C^+  \text{, or} \\
        & \text{if } x \in (f-u, f-l) \text{ such that } f - x \in C^+; \\
        \pi(x) - s  & \text{otherwise},
      \end{cases}
    \end{equation}
    where $x_{39} = \frac{4899}{5000}$, 
    $s = \pi(x_{39}^-)+\pi(1+l-x_{39})-\pi(l) = \frac{19}{23998}$,
    \begin{align*}
      C &= \lbrace  x \in \R/T \st x = \tfrac{l+u}{2} \text{ or }
      \tfrac{l+u-t_1}{2} \text{ or }  \tfrac{l+u-t_2}{2}\rbrace, \\
      C^+ &=\lbrace x \in \R/T \st \text{arbitrary choice of one element of }\{x, \phi(x)\},\, x \not\in C \rbrace
    \end{align*}
    with $\phi\colon \R/T \ni x \mapsto l+u - x$. 

\begin{theorem}
\label{lemma:discontinuous_examples}
\begin{enumerate}[\rm(1)]
\item The function $\psi= \sage{hildebrand\_discont\_3\_slope\_1()}$ is extreme, but is neither a weak facet nor a facet.
\item The function $\pi= \sage{kzh\_minimal\_has\_only\_crazy\_perturbation\_1()}$ is not an extreme function (nor a facet), but is a weak facet.
\item The function $\pi_{\mathrm{lifted}}$ is extreme; it is a
  weak facet but is not a facet.
\end{enumerate}
\end{theorem}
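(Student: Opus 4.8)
The plan is to prove each separation through the additivity-domain characterizations established above: the ``if and only if'' Facet Theorem (\autoref{lemma:facet_theorem}) and the weak-facet criterion of \autoref{lemma:weak_facet_minimal}(3), both of which are phrased entirely in terms of the genuine additivity domain $E$. The conceptual engine behind all three separations is that $E$ does not record additivities-in-the-limit, whereas extremality (via effective perturbations) is governed by the finer family $E_{\bullet}$; for discontinuous $\pi$ the inclusion $\bar\Pi^{E_{\bullet}}(\R,\Z)\subseteq\bar\Pi^{E}(\R,\Z)$ is strict, and this gap is exactly what distinguishes the three notions. Throughout I will use the stated containment $\{\text{facets}\}\subseteq\{\text{extreme functions}\}\cap\{\text{weak facets}\}$ freely.

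For (1), extremality of $\psi=\sagefunc{hildebrand_discont_3_slope_1}()$ is known (Hildebrand's construction, reported in the survey) and is confirmed by the extremality test, which produces no nonzero effective perturbation. To show $\psi$ is neither a facet nor a weak facet it suffices, by \autoref{lemma:facet_theorem} and \autoref{lemma:weak_facet_minimal}(3), to exhibit a single minimal valid function $\pi'\neq\psi$ with $E(\psi)\subsetneq E(\pi')$: the proper inclusion defeats the weak-facet criterion, while $\pi'\neq\psi$ together with $E(\psi)\subseteq E(\pi')$ defeats the facet criterion. I would build $\pi'$ by exploiting the discontinuities of $\psi$: since $E(\psi)$ omits additivities that hold only in the limit, one realizes such a limit-additivity as a \emph{genuine} additivity of a nearby piecewise linear minimal valid $\pi'$ (by adjusting the one-sided limit values of $\psi$), yielding $E(\psi)\subsetneq E(\pi')$. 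Verifying minimality of the explicit $\pi'$ and the strict inclusion is a finite computation over the complex $\Delta\P$.

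For (2), the function $\pi$ is minimal valid and, by the companion paper \cite{koeppe-zhou:crazy-perturbation}, is not extreme: it admits a nonzero $T$-microperiodic effective perturbation supported on the two uncovered intervals $(l,u)$ and $(f-u,f-l)$. Since facets are extreme, $\pi$ is not a facet. The substantive claim is that $\pi$ is a weak facet, which by \autoref{lemma:weak_facet_minimal}(3) means that every minimal valid $\pi'$ with $E(\pi)\subseteq E(\pi')$ satisfies $E(\pi')=E(\pi)$. Given such a $\pi'$, set $\bar\pi=\pi'-\pi$. On the covered part of the domain the additivities in $E(\pi)$, together with coverage, fix the slopes and force $\bar\pi$ to be affine there exactly as for $\pi$; on the two uncovered intervals the same analysis as in the companion paper forces $\bar\pi$ to be $T$-microperiodic. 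One then checks that adding a $T$-microperiodic function supported on the uncovered intervals produces no new genuine additivity, so that $E(\pi')=E(\pi)$, establishing the weak-facet property.

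For (3), the extremality of $\pi_{\mathrm{lifted}}$ is the crux. The lifting distributes the three values $0$, $+s$, $-s$ across the cosets of $T$ according to the partition into $C$, $C^+$, and its complement, with $s=\pi(x_{39}^-)+\pi(1+l-x_{39})-\pi(l)$ chosen so as to create genuine additivity relations tying the lifted values together. I would show that these relations pin every effective perturbation to zero on the uncovered intervals—destroying the microperiodic freedom that made $\pi$ non-extreme—while coverage handles the remainder; hence $\pi_{\mathrm{lifted}}$ is extreme, and its weak-facet property follows exactly as in (2). Finally, $\pi_{\mathrm{lifted}}$ is \emph{not} a facet: the construction depends on an arbitrary choice of representatives in $C^+$, and the $\phi$-reflected choice, with $\phi\colon x\mapsto l+u-x$, yields a distinct minimal valid function $\pi'_{\mathrm{lifted}}\neq\pi_{\mathrm{lifted}}$ with the \emph{same} additivity domain $E(\pi'_{\mathrm{lifted}})=E(\pi_{\mathrm{lifted}})$; by \autoref{lemma:facet_theorem} this prevents $\pi_{\mathrm{lifted}}$ from being a facet, even though it is both extreme and a weak facet. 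The main obstacle across (2) and (3) is controlling perturbations on the uncovered intervals using only the genuine domain $E$: reducing an arbitrary $\pi'$ with $E(\pi)\subseteq E(\pi')$ to a microperiodic difference, and verifying that the lifting annihilates this microperiodic direction, both rest on the detailed crazy-perturbation structure of \cite{koeppe-zhou:crazy-perturbation}.
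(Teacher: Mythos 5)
Your overall framing---reduce all three separations to the $E$-based criteria of \autoref{lemma:facet_theorem} and \autoref{lemma:weak_facet_minimal}(3) and exploit the structure of the uncovered intervals from the companion paper---matches the paper, and your part (1) is essentially the paper's argument (the paper just exhibits the witness explicitly: $\pi'(x)=2x$ on $[0,\frac12]$ and $\pi'=\psi$ elsewhere, with $E(\psi)\subsetneq E(\pi')$). The gap is in the core of part (2). After obtaining that $\bar\pi=\pi'-\pi$ vanishes off the uncovered intervals and is $T$-microperiodic on them, you assert that ``one then checks that adding a $T$-microperiodic function supported on the uncovered intervals produces no new genuine additivity.'' That is not a check; it is precisely the statement to be proved, and it is false for microperiodic perturbations in general, since a perturbation of sufficient amplitude can close a positive subadditivity slack and thereby create a new additivity. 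What the paper actually proves is a quantitative bound, its condition (v): $|\bar\pi(x)|\le s$ on the uncovered intervals, derived from the subadditivity of $\pi'$ by a density argument (choose $x\in\bar x+T$ with $x-(1+l-x_{39})=\delta$ small; if $\bar\pi(\bar x)<-s$ then $\Delta\pi'(x,1+l-x)=\bar\pi(\bar x)+s+\delta(c_2-c_3)<0$). This bound is then combined with a computationally verified gap property of $\pi$: every face $F\in\Delta\P$ whose projections meet the uncovered intervals satisfies either $\Delta\pi_F\equiv 0$ on $\verts(F)$, or $\Delta\pi_F\ge n_F\cdot s$ on $\verts(F)$ with strictness at some vertex, while $|\Delta\bar\pi|\le n_F\cdot s$ on $\relint(F)$. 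Only this combination yields $E(\pi')\subseteq E(\pi)$; both ingredients are missing from your proposal.

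In part (3) your proposed mechanism is also off target: you say the lifting ``creates genuine additivity relations'' that pin every effective perturbation to zero. It does not---the paper shows $E(\pi_{\mathrm{lifted}})=E(\pi)$, i.e., the lift creates \emph{no} new genuine additivities (that is exactly why $\pi_{\mathrm{lifted}}$ fails to be a facet); the new additivities it produces are only additivities-in-the-limit, which $E$ does not record. The actual extremality proof again runs through condition (v): writing $\pi_{\mathrm{lifted}}=\frac12(\pi^1+\pi^2)$ and $\bar\pi^i=\pi^i-\pi$, each $\bar\pi^i$ satisfies (i)--(v), hence $|\bar\pi^i|\le s$; since $\bar\pi=\pi_{\mathrm{lifted}}-\pi$ attains the extreme values $+s$ on $C^+$ and $-s$ off $C\cup C^+$, and $\bar\pi^1+\bar\pi^2=2\bar\pi$, both $\bar\pi^i$ are forced to coincide with $\bar\pi$ pointwise (and to vanish on $C$ by (iii)). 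So perturbations are pinned not to zero but to $\bar\pi$ itself, and the pinning comes from the bound (v) plus the extremality of the values $\pm s$, not from new relations in $E$. Your non-facet argument via the $\phi$-reflected choice of $C^+$ would work, but the paper's witness is simpler: $E(\pi_{\mathrm{lifted}})=E(\pi)$ with $\pi\ne\pi_{\mathrm{lifted}}$ already violates the criterion of \autoref{lemma:facet_theorem}.
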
 

\begin{proof}
(1) The function $\psi= \sage{hildebrand\_discont\_3\_slope\_1()}$ is extreme
(Hildebrand, 2013, unpublished, reported in \cite{igp_survey}).  
This can be verified using the extremality test implemented in \cite{infinite-group-relaxation-code}. Consider the minimal valid function $\pi'$ defined by
\[
\pi'(x)= 
      \begin{cases} 
        2x & \text{if } x \in [0, \frac12]; \\
        \pi(x) & \text{if } x \in (\frac12, 1).
      \end{cases}
\]
Observe that $E(\psi)$ is a strict subset of $E(\pi')$. 
See \autoref{fig:simple_E_pi_extreme_not_facet} for an illustration. Thus, by \autoref{lemma:weak_facet_minimal}(3), the function $\psi$ is not a weak facet (nor a facet). 

\smallbreak

(2) By \cite[{Theorem~\ref{crazy:th:kzh_minimal_has_only_crazy_perturbation_1}}]{koeppe-zhou:crazy-perturbation}, the function $\pi= \sage{kzh\_minimal\_has\_only\_crazy\_}$\allowbreak$\sage{perturbation\_1()}$ is minimal valid, but is not extreme. Let $\pi'$ be a minimal valid function such that $E(\pi) \subseteq E(\pi')$. We want to show that $E(\pi)= E(\pi')$. Consider $\bar\pi = \pi' - \pi$, which is a bounded $\Z$-periodic function satisfying that $E(\pi) \subseteq E(\bar\pi)$. 
We apply the proof of \cite[{Theorem~\ref{crazy:th:kzh_minimal_has_only_crazy_perturbation_1}}, Part (ii)]{koeppe-zhou:crazy-perturbation} to the perturbation $\bar\pi$, and obtain that \begin{enumerate}
\item[(i)] $\bar\pi(x)=0$ for $x \not\in (l, u) \cup (f-u, f-l)$; 
\item[(ii)] $\bar\pi$ is constant on each coset in $\R/T$ on the pieces $(l, u)$ and $(f-u, f-l)$. 
\end{enumerate}
Furthermore, it follows from the additivity relations of $\pi$ and $E(\pi) \subseteq E(\bar\pi)$ that 
\begin{enumerate}
\item[(iii)] $\bar\pi(x)+\bar\pi(y)=0$ for $x, y \in (l,u)$ such that $x+y \in \{l+u, l+u-t_1, l+u-t_2\}$;
\item[(iv)] $\bar\pi(x)+\bar\pi(y)=0$ for $x \in (l,u)$, $y \in (f-u, f-l)$ such that $x+y = f$.
\end{enumerate}
We now show that $\bar\pi$ also satisfies the following condition:
\begin{enumerate}
\item[(v)] $\left|\bar\pi(x)\right| \leq s$ for all $x \in (l, u) \cup (f-u, f-l)$.
\end{enumerate}
Indeed, by (iii) and (iv), it suffices to show that for any $x \in (l, u)$, we have $\bar\pi(x) \geq -s$. Suppose, for the sake of contradiction, that there is $\bar x \in (l, u)$ such that $\bar\pi(\bar x) < -s$. Since the group $T$ is dense in $\R$, we can find $x \in (l, u)$ such that $x \in \bar x + T$ and $x$ is arbitrarily close to $(1+l - x_{39})$. Let $\delta = x - (1+l - x_{39})$. We may assume that $\delta \in (0, \frac{-s-\bar\pi(\bar x)}{c_2 - c_3})$, where $c_2$ and $c_3$ denote the slope of $\pi$ on the pieces $(l, u)$ and $(0, x_1)$, respectively. See \cite[{Table~\ref{crazy:tab:kzh_minimal_has_only_crazy_perturbation_1}}]{koeppe-zhou:crazy-perturbation} for the concrete values of the parameters. 
Let $y = 1+l -x$. Then $y = x_{39} -\delta$. It follows from (i) that $\bar\pi(y)=0$ and $\bar\pi (x+y) = \bar\pi(l)=0$. Now consider $\Delta\pi'(x,y) = \pi'(x)+\pi'(y)-\pi'(x+y)$, where
\[
\begin{array}{r@{\;}l@{\;}l}
\pi'(x)&= \bar\pi(x) + \pi(x)&=\bar\pi(x) +\pi(1+l-x_{39})+\delta c_2;\\
\pi'(y)&=\pi(y)&=\pi(x_{39}^-) - \delta c_3;\\
\pi'(x+y)&=\pi(x+y)&=\pi(l).  
\end{array}
\]
Since $x - \bar x \in T$, the condition (ii) implies that $\bar\pi(x) = \bar\pi(\bar x)$.
We have
\begin{align*}
\Delta\pi'(x,y) &= \bar\pi(\bar x)+[\pi(1+l-x_{39})+ \pi(x_{39}^-)-\pi(l)]+\delta(c_2 - c_3) \\
& =\bar\pi(\bar x) + s + \delta(c_2 - c_3)
  < 0,
\end{align*}
a contradiction to the subadditivity of $\pi'$. Therefore, $\bar\pi$ satisfies
condition~(v).

Let $F$ be a face of $\Delta\P$. Denote by $n_F \in \{0, 1, 2\}$ the number of projections $p_i(\relint(F))$ for $i=1, 2, 3$ that intersect with  $(l, u) \cup (f-u, f-l)$.  See \autoref{fig:complex_nf}.
It follows from the conditions (i) and (v) that
\[\left|\Delta\bar\pi(x,y)\right| \leq n_F \cdot s \quad\text{ for any } (x,y) \in \relint(F).\]
It can be verified computationally that, if $F\in \Delta\P$ has $n_F \neq 0$, then either 
\begin{enumerate}
\item[(a)] $\Delta\pi_F(u,v)=0$ for all $(u,v) \in \verts(F)$, or
\item[(b)] $\Delta\pi_F(u,v)\geq n_F \cdot s$ for all $(u,v) \in \verts(F)$, and the inequality is strict for at least one vertex.
\end{enumerate} 
Let $(x, y) \in [0, 1)^2$ such that  $(x,y) \not\in E(\pi)$. Then $\Delta\pi(x,y)>0$ since $\pi$ is subadditive. Consider the (unique) face $F \in \Delta\P$ such that $(x,y) \in \relint(F)$.  We will show that $\Delta\pi'(x,y) > 0$. If $n_F = 0$, then $\Delta\bar\pi(x,y)=0$, and hence $\Delta\pi'(x,y)=\Delta\pi(x,y)>0$. Now assume that $n_F \neq 0$.  Since $\Delta\pi_F$ is affine linear on $F$, $\Delta\pi(x,y)$ is a convex combination of $\{\Delta\pi_F(u, v) \st (u, v) \in \verts(F)\}$. We have $\Delta\pi(x,y) > 0$ by assumption. Thus the above case (b) applies, which implies that $\Delta\pi(x,y) > n_F \cdot s$. Hence $\Delta\pi'(x,y)=\Delta\pi(x,y)+\Delta\bar\pi(x,y) > 0$ holds when $n_F \neq 0$ as well. Therefore, $(x,y)\not\in E(\pi')$. We obtain that $E(\pi')\subseteq E(\pi)$. This, together with the assumption $E(\pi)\subseteq E(\pi')$, implies that $E(\pi)=  E(\pi')$. 

We conclude, by \autoref{lemma:weak_facet_minimal}(3), that $\pi$ is a weak facet.

\textit{Remark:} Conversely, if a $\Z$-periodic function $\bar\pi$ satisfies the conditions (i) to (v), then $\pi^\pm = \pi \pm \bar\pi$ are minimal valid functions, and $E(\pi)=E(\pi^+)=E(\pi^-)$.  

\smallbreak

(3) Let $\bar\pi = \pi_{\mathrm{lifted}} - \pi$
. Observe that $\bar\pi$ satisfies the conditions (i) to (v) in (2). 
Thus, the function $\pi_{\mathrm{lifted}}$ is minimal valid and $E(\pi_{\mathrm{lifted}})=E(\pi)$. Let $\pi'$ be a minimal valid function such that $E(\pi_{\mathrm{lifted}})\subseteq E(\pi')$. Then, as shown in (2), we have $E(\pi_{\mathrm{lifted}}) = E(\pi')$. It follows from \autoref{lemma:weak_facet_minimal}(3) that $\pi_{\mathrm{lifted}}$ is a weak facet. However, the function $\pi_{\mathrm{lifted}}$ is not a facet, since $E(\pi_{\mathrm{lifted}})=E(\pi)$ but $\pi_{\mathrm{lifted}} \neq \pi$. Next, we show that $\pi_{\mathrm{lifted}}$ is an extreme function. 

Suppose 
that $\pi_{\mathrm{lifted}}$ can be written as $\pi_{\mathrm{lifted}} = \frac12 (\pi^1+\pi^2)$, where $\pi^1, \pi^2$ are minimal valid functions. 
Then $E(\pi_{\mathrm{lifted}}) \subseteq E(\pi^1)$ and $E(\pi_{\mathrm{lifted}}) \subseteq E(\pi^2)$. 
Let $\bar\pi^1= \pi^1-\pi$ and $\bar\pi^2=\pi^2-\pi$. We have that  $E(\pi) \subseteq E(\bar\pi^1)$ and $E(\pi) \subseteq E(\bar\pi^2)$. Hence, as shown in (2), $\bar\pi^1$ and $\bar\pi^2$ satisfy the conditions (i) to (v). We will show that $\bar\pi^1=\bar\pi^2$.

For $x\not\in (l,u)\cup (f-u, f-l)$, we have $\bar\pi^i(x)=0$ ($i=1, 2$) by condition (i). It remains to prove that $\bar\pi^1(x)=\bar\pi^2(x)$ for $x \in (l,u)\cup (f-u, f-l)$. By the symmetry condition (iv), it suffices to consider $x \in (l, u)$. We distinguish three cases. If $x \in C$, then condition (iii) implies $\bar\pi^i(x)=0$ ($i=1, 2$). If $x \in C^+$, then $\bar\pi(x)=s$ by definition. Notice that $\bar\pi^1 +\bar\pi^2 = \pi^1+\pi^2-2\pi =2\pi_{\mathrm{lifted}}-2\pi = 2\bar\pi$, and that $\bar\pi^i(x) \leq s$ ($i=1, 2$) by condition (v). We have $\bar\pi^i(x)=s$ ($i=1, 2$) in this case. If $x \not\in C $ and $x \not\in C^+$, then $\bar\pi(x)=-s$, and hence $\bar\pi^i(x)=-s$ ($i=1, 2$). Therefore, $\bar\pi^1=\bar\pi^2$ and $\pi^1=\pi^2$, which proves that the function $\pi_{\mathrm{lifted}}$ is extreme.
\end{proof}

\clearpage
\providecommand\ISBN{ISBN }
\bibliographystyle{../amsabbrvurl}
\bibliography{../bib/MLFCB_bib}

\providecommand\CheckAccent[1]{\accent20 #1}
\providecommand{\bysame}{\leavevmode\hbox to3em{\hrulefill}\thinspace}
\providecommand{\MR}{\relax\ifhmode\unskip\space\fi MR }
\providecommand{\MRhref}[2]{%
  \href{http://www.ams.org/mathscinet-getitem?mr=#1}{#2}
}
\providecommand{\href}[2]{#2}
\begin{thebibliography}{10}

\bibitem{aczel1989functional}
J.~Acz{\'e}l and J.~G. Dhombres, \emph{Functional equations in several
  variables}, no.~31, Cambridge University Press, 1989.

\bibitem{bccz08222222}
A.~Basu, M.~Conforti, G.~Cornu{\'e}jols, and G.~Zambelli, \emph{A
  counterexample to a conjecture of {G}omory and {J}ohnson}, Mathematical
  Programming Ser. A \textbf{133} (2012), no.~1--2, 25--38, \href
  {http://dx.doi.org/10.1007/s10107-010-0407-1}
  {\path{doi:10.1007/s10107-010-0407-1}}.

\bibitem{basu-hildebrand-koeppe:equivariant}
A.~Basu, R.~Hildebrand, and M.~K{\"o}ppe, \emph{Equivariant perturbation in
  {G}omory and {J}ohnson's infinite group problem. {I}. {T}he one-dimensional
  case}, Mathematics of Operations Research \textbf{40} (2014), no.~1,
  105--129, \href {http://dx.doi.org/10.1287/moor.2014.0660}
  {\path{doi:10.1287/moor.2014.0660}}.

\bibitem{bhk-IPCOext}
\bysame, \emph{Equivariant perturbation in {G}omory and {J}ohnson's infinite
  group problem. {III}. {F}oundations for the $k$-dimensional case and
  applications to $k=2$}, eprint arXiv:{\penalty0}1403.4628 [math.OC], 2016,
  {}Mathematical Programming, Series A, to appear.

\bibitem{igp_survey}
\bysame, \emph{Light on the infinite group relaxation {I}: foundations and
  taxonomy}, 4OR \textbf{14} (2016), no.~1, 1--40, \href
  {http://dx.doi.org/10.1007/s10288-015-0292-9}
  {\path{doi:10.1007/s10288-015-0292-9}}.

\bibitem{igp_survey_part_2}
\bysame, \emph{Light on the infinite group relaxation {II}: sufficient
  conditions for extremality, sequences, and algorithms}, 4OR \textbf{14}
  (2016), no.~2, 107--131, \href {http://dx.doi.org/10.1007/s10288-015-0293-8}
  {\path{doi:10.1007/s10288-015-0293-8}}.

\bibitem{basu-hildebrand-koeppe-molinaro:k+1-slope}
A.~Basu, R.~Hildebrand, M.~K\"oppe, and M.~Molinaro, \emph{A~$(k+1)$-slope
  theorem for the $k$-dimensional infinite group relaxation}, SIAM Journal on
  Optimization \textbf{23} (2013), no.~2, 1021--1040, \href
  {http://dx.doi.org/10.1137/110848608} {\path{doi:10.1137/110848608}}.

\bibitem{corner_survey}
M.~Conforti, G.~Cornu{\'e}jols, and G.~Zambelli, \emph{Corner polyhedra and
  intersection cuts}, Surveys in Operations Research and Management Science
  \textbf{16} (2011), 105--120.

\bibitem{Dey-thesis}
S.~S. Dey, \emph{Strong cutting planes for unstructured mixed integer programs
  using multiple constraints}, {Ph.D.} thesis, Purdue University, West
  Lafayette, Indiana, 2007, available from
  \url{http://search.proquest.com/docview/304827785}, \ISBN{9780549303756}.

\bibitem{dey3}
S.~S. Dey and J.-P.~P. Richard, \emph{Facets of two-dimensional infinite group
  problems}, Mathematics of Operations Research \textbf{33} (2008), no.~1,
  140--166, \href {http://dx.doi.org/10.1287/moor.1070.0283}
  {\path{doi:10.1287/moor.1070.0283}}.

\bibitem{dey1}
S.~S. Dey, J.-P.~P. Richard, Y.~Li, and L.~A. Miller, \emph{On the extreme
  inequalities of infinite group problems}, Mathematical Programming
  \textbf{121} (2010), no.~1, 145--170, \href
  {http://dx.doi.org/10.1007/s10107-008-0229-6}
  {\path{doi:10.1007/s10107-008-0229-6}}.

\bibitem{infinite}
R.~E. Gomory and E.~L. Johnson, \emph{Some continuous functions related to
  corner polyhedra, {I}}, Mathematical Programming \textbf{3} (1972), 23--85,
  \href {http://dx.doi.org/10.1007/BF01584976} {\path{doi:10.1007/BF01584976}}.

\bibitem{infinite2}
\bysame, \emph{Some continuous functions related to corner polyhedra, {II}},
  Mathematical Programming \textbf{3} (1972), 359--389, \href
  {http://dx.doi.org/10.1007/BF01585008} {\path{doi:10.1007/BF01585008}}.

\bibitem{tspace}
\bysame, \emph{T-space and cutting planes}, Mathematical Programming
  \textbf{96} (2003), 341--375, \href
  {http://dx.doi.org/10.1007/s10107-003-0389-3}
  {\path{doi:10.1007/s10107-003-0389-3}}.

\bibitem{infinite-group-relaxation-code}
C.~Y. Hong, M.~K{\"o}ppe, and Y.~Zhou, \emph{{SageMath} program for computation
  and experimentation with the $1$-dimensional {G}omory--{J}ohnson infinite
  group problem}, 2014--, available from
  \url{https://github.com/mkoeppe/infinite-group-relaxation-code}.

\bibitem{koeppe-zhou:crazy-perturbation}
M.~K{\"o}ppe and Y.~Zhou, \emph{Equivariant perturbation in {G}omory and
  {J}ohnson's infinite group problem. {VI}. {T}he curious case of two-sided
  discontinuous functions}, eprint arXiv:{\penalty0}1605.03975 [math.OC], 2016.

\bibitem{Richard-Dey-2010:50-year-survey}
J.-P.~P. Richard and S.~S. Dey, \emph{The group-theoretic approach in mixed
  integer programming}, 50 Years of Integer Programming 1958--2008
  (M.~J{\"u}nger, T.~M. Liebling, D.~Naddef, G.~L. Nemhauser, W.~R.
  Pulleyblank, G.~Reinelt, G.~Rinaldi, and L.~A. Wolsey, eds.), Springer Berlin
  Heidelberg, 2010, pp.~727--801, \href
  {http://dx.doi.org/10.1007/978-3-540-68279-0_19}
  {\path{doi:10.1007/978-3-540-68279-0_19}}, \ISBN{978-3-540-68274-5}.

\end{thebibliography}

\clearpage
\appendix

\section{Discontinuous Gomory--Johnson functions in the literature}
\label{appendix:discont-lit}

In our paper we investigate the notions of facets and weak
facets in the case of discontinuous functions.  This appears to be a first in
the published literature.  All papers that consider discontinuous functions
only used the notion of extreme functions. 
In particular,
Dey--Richard--Li--Miller \cite{dey1}, who were the first to consider
previously known discontinuous functions as first-class members of the
Gomory--Johnson hierarchy of valid functions, use extreme functions
exclusively; whereas \cite{dey3}, which was completed by a subset of the
authors in the same year, uses (weak) facets exclusively. 
The same is true in Dey's Ph.D. thesis \cite{Dey-thesis}:  The notion of
extreme functions is used in chapters regarding discontinuous functions;
whereas the notion of facets is used when talking about (2-row) continuous
functions.  
Dey (2016, personal communication) remembers that at that time, he and his
coauthors were aware that facets were the strongest notion and they would
strive to establish facetness of valid functions whenever possible.
However, in the excellent survey \cite{Richard-Dey-2010:50-year-survey},
facets are no longer mentioned and the exposition is in terms of extreme
functions.

\clearpage
\section{Omitted lemmas and proofs}
\label{appendix:omitted}

In the proof of \autoref{lemma:lipschitz-equiv-perturbation}, we will need the
following elementary geometric estimate. 
\begin{lemma}
\label{lemma:affine-function-min-value}
Let $F \subset [0,1]^2$ be a convex polygon with vertex set $\verts(F)$, and let $g\colon F \to \R$ be an affine linear function. Suppose that for each $v \in \verts(F)$, either $g(v) = 0$ or $g(v) \geq m$ for some $m > 0$. Let $S = \{x \in F\st g(x) = 0\}$, and assume that $S$ is nonempty. Then $g(x) \geq md(x, S)/2$ for any $x \in F$, where $d(x, S)$ denotes the Euclidean distance from $x$ to $S$.
\end{lemma}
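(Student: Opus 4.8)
The plan is to avoid any case analysis on the geometry of $F$ or the orientation of the zero line, and instead reduce the inequality on all of $F$ to a finite check at the vertices by exploiting a concavity structure. First I would note that $S = \{x \in F \st g(x) = 0\}$ is the intersection of the convex polygon $F$ with the affine hyperplane $\{g = 0\}$, hence convex (and nonempty by hypothesis). Consequently the map $x \mapsto d(x, S)$ is convex on $\R^2$, being the Euclidean distance to a nonempty convex set.

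Next I would introduce the auxiliary function $G(x) := g(x) - \tfrac{m}{2}\, d(x, S)$. Since $g$ is affine and $-\tfrac{m}{2}\,d(\cdot, S)$ is concave, $G$ is concave on $F$. Because a concave function on a compact convex set attains its minimum at an extreme point, and the extreme points of the convex polygon $F$ are exactly its vertices, we have $\min_{x \in F} G(x) = \min_{v \in \verts(F)} G(v)$. Thus it suffices to verify $G(v) \geq 0$ at each vertex $v$, which turns the continuous inequality into the finite hypothesis we are given.

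For the vertex check I would use the dichotomy in the hypothesis. If $g(v) = 0$, then $v \in S$, so $d(v, S) = 0$ and $G(v) = 0$. If instead $g(v) \geq m$, then I invoke $F \subseteq [0,1]^2$: every point of $S$ lies in $F$, so $d(v, S) \leq \operatorname{diam}(F) \leq \operatorname{diam}([0,1]^2) = \sqrt{2}$, whence $G(v) \geq m - \tfrac{m}{2}\sqrt{2} = m\bigl(1 - \tfrac{1}{\sqrt{2}}\bigr) > 0$. In either case $G(v) \geq 0$, so $G \geq 0$ throughout $F$, which is precisely the claimed bound $g(x) \geq \tfrac{m}{2}\,d(x, S)$.

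I do not expect a genuine obstacle here. The only conceptual point is the reduction to vertices, namely recognizing that $d(\cdot, S)$ is convex so that $G$ is concave; once that is in place the vertex bound is immediate. The hypothesis $F \subseteq [0,1]^2$ enters solely to bound the diameter by $\sqrt{2}$, and the factor $\tfrac{1}{2}$ is comfortably implied because $\sqrt{2} < 2$ — indeed the argument would survive any constant up to $1/\sqrt{2}$, and it does not even require $g \geq 0$ as a separate input, since nonnegativity of $g$ on $F$ follows a posteriori from the conclusion.
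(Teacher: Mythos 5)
Correct, and essentially the same approach as the paper: the paper writes $x = \sum_{v} \alpha_v v$ as a convex combination of vertices and chains $d(x,S) \leq \sum_{v} \alpha_v\, d(v,S)$ (convexity of $d(\cdot,S)$, valid because $S$ is convex) with the bound $d(v,S) \leq 2$ and the affine identity $g(x) = \sum_{v} \alpha_v g(v)$, which is precisely your concavity argument for $G = g - \tfrac{m}{2}\, d(\cdot,S)$ plus the extreme-point reduction, unrolled. The only differences are cosmetic: you make the convexity of $S$ and of the distance function explicit (the paper attributes that inequality to the triangle inequality), and you use the sharper diameter bound $\sqrt{2}$ where the paper settles for $2$.
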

\begin{proof}
Let $x \in F$ be arbitrary. We may write
\[ x = \sum_{v \in \verts(F)} \alpha_v v \]
for some $\alpha_v \in [0,1]$ with $\sum_v \alpha_v = 1$. By the triangle inequality,
\[ d(x, S) \leq \sum_{v \in \verts(F)} \alpha_v d(v, S). \]
For those $v \in \verts(F)$ with $g(v) = 0$, we have $v \in S$ by definition and thus $d(v,S) = 0$. Therefore,
\[ d(x, S) \leq \sum_{\substack{v \in \verts(F) \\ g(v) \geq m}} \alpha_v d(v, S) \leq 2 \sum_{\substack{v \in \verts(F) \\ g(v) \geq m}} \alpha_v . \]
Using the affine linearity of $g$, it thus follows that
\[ g(x) = \sum_{v \in \verts(F)} \alpha_v g(v) = \sum_{\substack{v \in \verts(F) \\ g(v) = 0}} \alpha_v g(v) + \sum_{\substack{v \in \verts(F) \\ g(v) \geq m}} \alpha_v g(v) \geq \frac{md(x, S)}{2}.\]
\end{proof}

\begin{proof}[of \autoref{lemma:lipschitz-equiv-perturbation}]
Let 
\begin{align*}
m := \min\{\,\Delta\pi_F(x,y) \mid {} & (x,y)\in \verts(\Delta \mathcal{P}),\, F
  \text{ is a face of } \Delta \mathcal{P} \\ & \text { such that } (x,y)\in F \text{ and } \Delta\pi_F(x,y)\neq 0\,\};
\end{align*}
Let $C$ be a positive number that is greater than the Lipschitz constant of $\bar\pi$ over the relative interior of each face of the complex $\P$, and let
\[M := \sup_{(x,y)\in \R^2} \left|\Delta\bar \pi(x,y)\right|.\]
Note that $C$ and $M$ are well defined since $\bar\pi$ is piecewise Lipschitz continuous over $\P$ and hence bounded. If $M=0$, then $\bar \pi \equiv 0$ and $\bar \pi \in \tilde\Pi^{\pi}(\R,\Z)$ holds trivially. In the following, we assume $M>0$. 
Define 
\(\epsilon := \min\big\{\frac{m}{M}, \frac{m}{8C}\big\}.\)
We also have $m > 0$, since $\pi$ is subadditive and $\Delta\pi$ is non-zero somewhere. Thus, $\epsilon>0$.
Let $\pi^+ = \pi+\epsilon\bar\pi$ and $\pi^- = \pi-\epsilon\bar\pi$. We want
to show that $\pi^{\pm}$ are minimal valid
. 

We claim that $\pi^+$ and $\pi^-$ are subadditive functions. Let $(x, y) \in [0,1)^2$. Let $F$ be a face of $\Delta \P$ such that $(x, y) \in F$. We will show that $\Delta\pi_F^{\pm}(x, y) \geq 0$. 
First, assume $\Delta\pi_F(x, y)=0$. It follows from $E_F(\pi)\subseteq E_F(\bar\pi)$ that $\Delta\bar\pi_F(x, y)=0$. Therefore, $\Delta\pi_F^{\pm}(x, y) = 0$.
Next, assume $\Delta\pi_F(x, y) \neq 0$. Consider $S=\{(u,v) \in F \mid \Delta\pi_F(u,v)=0\}$, which is a closed set since $\Delta\pi_F$ is continuous over the face $F$. 
If $S=\emptyset$, then $\Delta\pi_F(u,v) \geq m$ for any $(u,v) \in \verts(F)$. We have $\Delta\pi_F(x,y)\geq m$ by the fact that $\Delta\pi_F$ is affine over $F$. 
Hence, in this case, 
\[\Delta\pi_F^{\pm}(x, y) 
  \geq \Delta\pi_F(x, y) -\epsilon\left| \Delta\bar\pi_F(x,y)\right| \geq m - \tfrac{m}{M}M \geq 0.\]
Now consider the case $S\neq \emptyset$. 
Let $d$ denote the Euclidean distance from $(x,y)$ to $S$. Since $S$ is a closed set, there exists $(x',y') \in S$ such that $(x-x')^2+(y-y')^2 = d^2$. 
Let $I = p_1(F), J = p_2(F)$ and $K = p_3(F)$. Then $x, x' \in I, \; y, y' \in J$ and $x+y, x'+y' \in K$.
It follows from $E_F(\pi)\subseteq E_F(\bar\pi)$ and $\Delta\pi_F(x', y')=0$ that $\Delta\bar\pi_F(x', y')=0$.
Therefore,
\begin{align*}
  \Delta\bar\pi_F(x,y) &= \Delta\bar\pi_F(x,y) -\Delta\bar\pi_F(x',y') \\
                       &= \bar\pi_I(x) - \bar\pi_I(x')+\bar\pi_J(y) - \bar\pi_J(y')+\bar\pi_K(x+y)-\bar\pi_K(x'+y').
\end{align*}
Since $\bar\pi$ is Lipschitz continuous over $\relint(I), \relint(J)$ and $\relint(K)$, we have that
\begin{align*}
\left|\bar\pi_I(x) - \bar\pi_I(x')\right| &\leq  C \left| x -x' \right| \leq Cd; \\
\left|\bar\pi_J(y) - \bar\pi_J(y')\right| &\leq  C \left| y -y' \right| \leq Cd; \\
\left|\bar\pi_K(x+y) - \bar\pi_K(x'+y')\right| &\leq  C \left| x+y -x'-y' \right| \leq 2Cd.
\end{align*}
Hence $\left|\Delta\bar\pi_F(x,y)\right| \leq 4Cd$.
Applying a geometric estimate
(\autoref{lemma:affine-function-min-value} with $g=\Delta\pi_F$) shows that
$\Delta\pi_F(x,y)\geq \frac{md}{2}$. 
Therefore, in the case where $S \neq \emptyset$,
\begin{align*}
\Delta\pi_F^{\pm}(x, y) &= \Delta\pi_F(x, y)  \pm \epsilon\Delta\bar\pi_F(x,y) \\
& \geq \Delta\pi_F(x, y) -\epsilon\left| \Delta\bar\pi_F(x,y)\right| 
\geq \frac{md}{2} - \frac{m}{8C}(4Cd) = 0.
\end{align*}

We showed that $\pi^{\pm}$ are subadditive. Since $\bar\pi \in \bar\Pi^E(\R,\Z)$, we have $\pi^{\pm}(0)=\pi(0)=0$ and $\pi^{\pm}(f)=\pi(f) =1$. The last result along with $E(\pi)\subseteq E(\bar\pi)$ imply that $\pi^+(x)+ \pi^+(y)=\pi^-(x)+ \pi^-(y) = 1$ if $x + y \equiv f \pmod 1$. The functions $\pi^\pm$ are non-negative. Indeed, suppose that $\pi^+(x)<0$ for some $x\in\R$, then it follows from the subadditivity that $\pi^+(nx)\leq n\pi^+(x)$ for any $n \in \Z_+$, which is a contradiction to the boundedness of $\pi^+$.

Thus, $\pi^\pm$ are minimal valid functions. We conclude that $\bar \pi \in \tilde\Pi^{\pi}(\R,\Z)$.
\end{proof}

\clearpage
\section{Reformulation of the Facet Theorem using perturbation functions}
\label{appendix:facet-theorem-with-perturbation}

In \cite[page 25, section 3.6]{igp_survey}, the Facet Theorem is reformulated in terms of
perturbation functions as follows:
\begin{quote}
  If $\pi$ is not a facet, then 
  there exists a non-zero $\bar \pi \in \bar \Pi^{E(\pi)}(\R,\Z)$ such that $\pi' = \pi + \bar \pi$ is a minimal valid function.   
\end{quote}
It then cautions that this last statement is not an ``if and only if'' statement.
We now prove that the following ``if and only if'' version holds.

\begin{lemma}
\label{lemma:facet_no_bar_pi}
A minimal valid function $\pi$ is a facet if and only if there is no non-zero $\bar{\pi} \in \bar\Pi^E(\R,\Z)$, where $E=E(\pi)$, such that $\pi+\bar\pi$ is minimal valid.
\end{lemma}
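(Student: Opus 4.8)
The plan is to obtain this biconditional directly from the ``if and only if'' Facet Theorem (\autoref{lemma:facet_theorem}) by exploiting the affine bijection $\pi' \leftrightarrow \bar\pi := \pi' - \pi$ between minimal valid functions $\pi'$ and candidate perturbations $\bar\pi$. I would prove the contrapositive form of both directions at once, namely that $\pi$ is \emph{not} a facet if and only if there exists a non-zero $\bar\pi \in \bar\Pi^E(\R,\Z)$ with $\pi+\bar\pi$ minimal valid. By \autoref{lemma:facet_theorem}, the left-hand side is equivalent to the existence of a minimal valid function $\pi' \neq \pi$ with $E(\pi)\subseteq E(\pi')$, so the whole task reduces to translating between such a $\pi'$ and such a $\bar\pi$.

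For the forward implication, suppose $\pi$ is not a facet and let $\pi' \neq \pi$ be minimal valid with $E(\pi)\subseteq E(\pi')$. Set $\bar\pi = \pi' - \pi$. Then $\bar\pi\not\equiv 0$ and $\pi+\bar\pi=\pi'$ is minimal valid, so it remains to check $\bar\pi \in \bar\Pi^E(\R,\Z)$. The conditions $\bar\pi(0)=0$ and $\bar\pi(f)=0$ follow from $\pi(0)=\pi'(0)=0$ and $\pi(f)=\pi'(f)=1$, and $\Z$-periodicity of $\bar\pi$ is inherited from that of $\pi$ and $\pi'$. For the additivity condition I would use the identity $\Delta\bar\pi = \Delta\pi' - \Delta\pi$: on $E(\pi)$ both $\Delta\pi$ and (since $E(\pi)\subseteq E(\pi')$) $\Delta\pi'$ vanish, so $\Delta\bar\pi$ vanishes there, i.e.\ $E(\pi)\subseteq E(\bar\pi)$, which by the Remark following \eqref{eq:perturbation_space} is exactly the third defining condition of $\bar\Pi^E(\R,\Z)$.

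For the reverse implication, suppose a non-zero $\bar\pi \in \bar\Pi^E(\R,\Z)$ exists with $\pi' := \pi + \bar\pi$ minimal valid. Then $\pi' \neq \pi$, and again via $\Delta\pi' = \Delta\pi + \Delta\bar\pi$ together with $E(\pi)\subseteq E(\bar\pi)$ (the Remark) one gets $\Delta\pi'(x,y)=0$ whenever $\Delta\pi(x,y)=0$, that is $E(\pi)\subseteq E(\pi')$. By \autoref{lemma:facet_theorem} this witnesses that $\pi$ is not a facet, completing the contrapositive.

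The heavy lifting is entirely packaged into \autoref{lemma:facet_theorem} (and, through it, \autoref{lemma:P_pi_and_E_pi}), so the only real obstacle is conceptual rather than computational: one must be careful that the requirement ``$\pi+\bar\pi$ is minimal valid'' is genuinely used and cannot be dropped. Without it, the naive biconditional ``$\pi$ is a facet iff $\bar\Pi^E(\R,\Z)=\{0\}$'' is false, because $\bar\Pi^E(\R,\Z)$ can contain non-zero elements $\bar\pi$ for which $\pi+\bar\pi$ fails to be valid while $\pi$ is still a facet; this is precisely the gap the survey flags as preventing an ``if and only if'' statement. Pinning down that the minimal-validity condition on $\pi+\bar\pi$ restores the equivalence is the point worth emphasizing in the write-up.
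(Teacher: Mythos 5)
Your proposal is correct and follows essentially the same route as the paper's own proof: both directions are reduced to \autoref{lemma:facet_theorem} via the correspondence $\bar\pi = \pi' - \pi$ between minimal valid functions $\pi'$ with $E(\pi)\subseteq E(\pi')$ and perturbations $\bar\pi \in \bar\Pi^E(\R,\Z)$ with $\pi+\bar\pi$ minimal valid; you merely phrase the argument contrapositively and spell out the membership check $\bar\pi \in \bar\Pi^E(\R,\Z)$ that the paper leaves implicit.
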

\begin{proof}
Let $\pi$ be a minimal valid function. 

Assume that $\pi$ is a facet. Let $\bar{\pi} \in \bar\Pi^E(\R,\Z)$ where $E=E(\pi)$ such that $\pi' = \pi+\bar\pi$ is minimal valid. It is clear that $E(\pi) \subseteq E(\pi')$. By \autoref{lemma:facet_theorem}, $\pi' = \pi$. Thus, $\bar\pi \equiv 0$.

Assume there is no non-zero $\bar{\pi} \in \bar\Pi^E(\R,\Z)$, where $E=E(\pi)$, such that $\pi+\bar\pi$ is minimal valid. Let $\pi'$ be a minimal valid function such that $E(\pi) \subseteq E(\pi')$. Consider $\bar\pi = \pi' -\pi$. We have that $\bar{\pi} \in \bar\Pi^E(\R,\Z)$ and that $\pi+\bar\pi = \pi'$ is minimal valid. Then $\bar\pi \equiv 0$ by the assumption. Hence, $\pi' = \pi$. It follows from \autoref{lemma:facet_theorem} that $\pi$ is a facet.
\end{proof}


\clearpage
\section{Additional illustrations}

\begin{figure}[h]
\centering
\begin{minipage}{.49\textwidth}
\centering
\includegraphics[width=.8\linewidth]{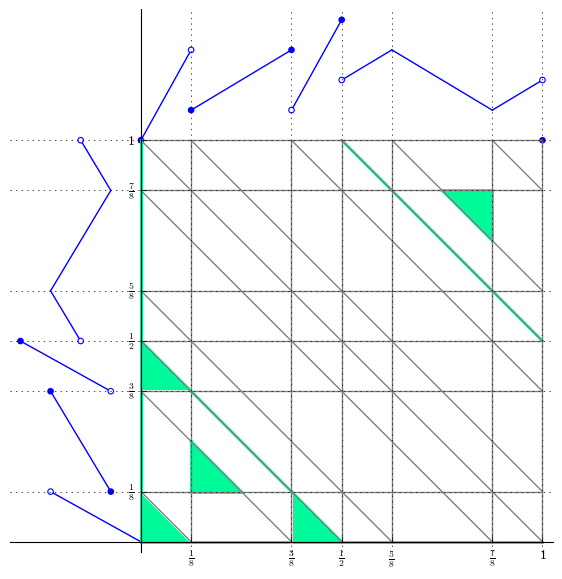}
\end{minipage}
\begin{minipage}{.49\textwidth}
\centering
\includegraphics[width=.8\linewidth]{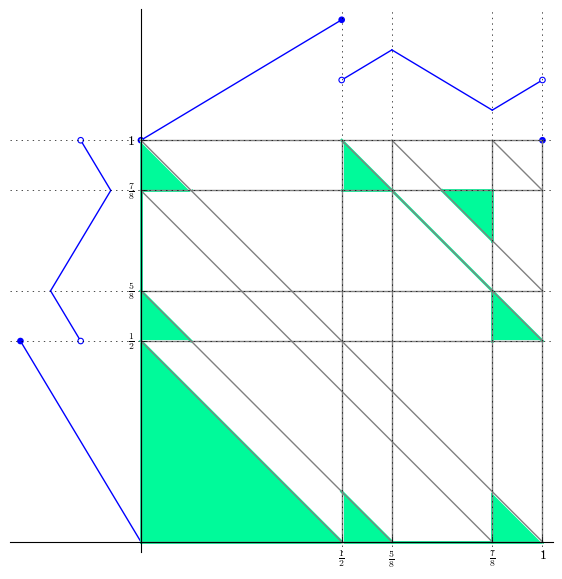}
\end{minipage}
\caption{Two diagrams of functions \sage{h} (\textit{blue graphs on the top and the left}) and polyhedral complexes $\Delta\P$ (\textit{gray solid lines}) with additive domains $E(\sage{h})$ (\textit{shaded in green}), 
as plotted by the command
\sage{plot\underscore{}2d\underscore{}diagram\underscore{}additive\underscore{}domain\underscore{}sans\underscore{}limits(h)}. (\textit{Left})
$\sage{h = hildebrand\_discont\_3\_slope\_1()}=\pi$. (\textit{Right}) \sage{h}
$=\pi'$ from the proof of \autoref{lemma:discontinuous_examples}\,(2).}
\label{fig:simple_E_pi_extreme_not_facet}
\end{figure}

\begin{figure}[h!]
\centering
\includegraphics[width=.6\linewidth]{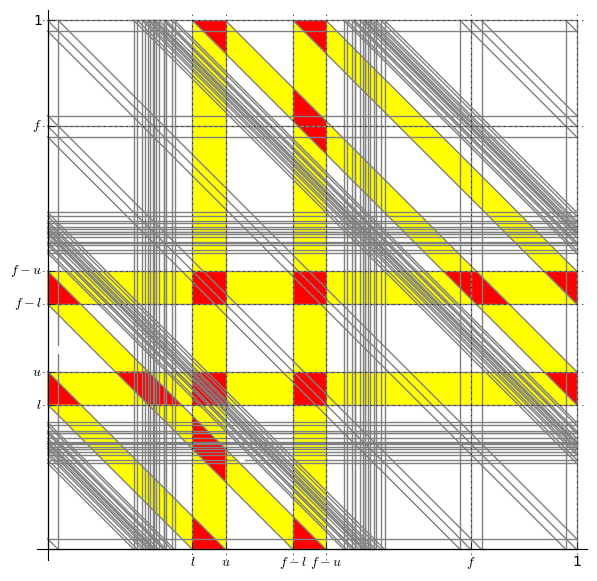}
\caption{Diagram of the polyhedral complex $\Delta\P$ of the function $\pi= \sage{kzh\_minimal\_has\_only\_crazy\_perturbation\_1()}$, where faces $F$ are color-coded according to the values $n_F$: $n_F=0$ (\emph{white}), $n_F=1$ (\emph{yellow}), $n_F=2$ (\emph{red}).}
\label{fig:complex_nf}
\end{figure}
            
\end{document}